\documentclass[12pt,a4paper]{article}

\usepackage[latin1]{inputenc}

\usepackage{amsmath, relsize, enumerate}
\usepackage{amsfonts, amsthm, dsfont}
\usepackage{amssymb}
\usepackage{graphicx}
\usepackage{bm}

\def\g{{\gamma}}

\def\e{{\varepsilon}}

\def\k{{\kappa}}
\def\l{{\lambda}}

\def\1{{\mathds{1}}}

\def\E{{\mathds{E}}}

\def\N{{\mathds{N}}}
\def\P{{\mathds{P}}}

\def\R{{\mathds{R}}}
\def\S{{\mathds{S}}}
\def\Z{{\mathds{Z}}}

\def\cF{\mathcal{F}}
\def\cG{\mathcal{G}}
\def\cK{\mathcal{K}}
\def\cL{\mathcal{L}}
\def\cP{\mathcal{P}}

\def\bd{\partial}

\newtheorem{theorem}{Theorem}[section]
\newtheorem{corollary}[theorem]{Corollary}
\newtheorem{lemma}[theorem]{Lemma}
\theoremstyle{definition}

\theoremstyle{remark}


\begin{document}
\title{Expected mean width of the randomized integer convex hull}

\author{Hong Ngoc Binh, Matthias Reitzner}


\date{}

\maketitle

\begin{abstract}
Let $K \in \R^d$ be a convex body, and assume that $L$ is a randomly rotated and shifted integer lattice. Let $K_L$ be the convex hull of the (random) points $K \cap L$.
The mean width $W(K_L)$ of $K_L$ is investigated. The asymptotic order of the mean width difference $W(\l K)-W((\l K)_L)$ is maximized by the order obtained by polytopes and minimized by the order for smooth convex sets as $\l \to \infty$.
\end{abstract}



\section{Introduction}

Let $K$ be a convex body and $\Z^d$ the integer lattice in $\R^d$. The convex hull $[K \cap \Z^d]$ of the intersection of $K$ with $\Z^d$ yields a polytope $K_{\Z^d}$, the integer convex hull of $K$. The higher dimensional Gauss circle problem asks for $K=\l B^d$, the ball of radius $\l>0$, how many integer points are contained in $K_{\Z^d}$ compared to its volume $V_d(K)$. The metric variant we consider here compares the volume of $K_{\Z^d}$ to the volume of $K$, and more generally the intrinsic volumes $V_j(K_{\Z^d})$ to the intrinsic volumes of $K$. This problem has a long history, and more recent investigations have been motivated by questions from integer programming and enumeration problems. We refer to the article by B\'ar\'any and Larman \cite{BarLarman98} and the survey article by B\'ar\'any \cite{Bar2008} for more details.

It is immediate that all these problems depend on the position, size and shape of $K$ in a delicate way. Consider e.g. the enlarged unit cube $K=\l C^d=[-\l,\l]^d$, $\l>0$, where all functionals of $K_{\Z^d}$ are locally constant for $\l \notin \N$ and have jumps at $\l \in \N$. This is due to the fact that $C^d$ is in a special position with respect to $\Z^d$. Therefore it is of interest to ask what happens in \emph{generic} situations. 

This question was made precise by B\'ar\'any and Matou\v{s}ek \cite{BarMat2005} who investigated the integer convex hull of $\l K$ when $K$ is in a random position, i.e. $K$ is a randomly rotated and shifted copy of a convex body $K_0$. Alternatively, one can intersect $K_0$ with a random lattice $L$, a randomly shifted and rotated copy of $\Z^d$, which yields the randomized integer convex hull, $ K_L = [ K \cap L] $.Of interest are metric quantities of this random polytope like the volume, surface area, mean width, and combinatorial quantities like the number of faces.

\medskip
This problem turns out to be surprisingly difficult even in simple cases. B\'ar\'any and Matou\v{s}ek proved that the expected number of vertices of $K_L$ is connected to the so-called floating body of $K$, if the boundary of $K$ is sufficiently smooth. Further, in the planar case, they proved integral bounds for the expected area difference $V_2(\l K)-V_2((\l K)_L)$ which led to the bounds
\begin{equation}\label{eq:BarMat}
c_1 \underbrace{\left( \ln V_2(\l K) \right)^2}_{\approx (\ln \l)^2} 
\leq 
V_2(\l K)- \E V_2((\l K)_L) 
\leq
c_2 \underbrace{V_2(\l K)^{\frac 13}}_{\approx \l^{\frac 23}} 
\end{equation}
for $\l $ sufficiently large. The lower bound is attained for polygons, and the upper bound for smooth convex sets.
We are not aware of any other results on the random integer convex hull $K_L$.

\medskip
Surprisingly the behaviour in formula \eqref{eq:BarMat} changes if we consider the mean width instead of the area. To define the mean width, consider for given $u \in \S^{d-1}$ two parallel hyperplanes orthogonal to $u$ squeezing $K$. The distance between these two hyperplanes is the width $W(K,u)$ in direction $u$ (for a formal definition see Section \ref{sec:notations}). The mean width is given by 
$$W(K)=\frac{1}{\omega_d}\int\limits_{\S ^{d-1}}W(K,u)du . $$
Up to a constant, the mean width is in the planar case the perimeter $P(K)$ of a convex body $K$, and in general dimensions the first intrinsic volume (for the definition of intrinsic volume we refer to Section \ref{sec:general}).

The first main result of our paper gives upper and lower bounds on the expected mean width difference.

\begin{theorem}\label{thm:genbounds}
Let $K$ be an arbitrary convex body. Then there are constants $ \g_1(K), \g_2 (K)$ such that
$$
\g_1 (K) \l^{-\frac{d-1}{d+1}}
\leq
W(\l K) - \E (W((\l K)_L))
\leq
\g_2 (K)
$$
as long as $\l \geq \l(K)$.
\end{theorem}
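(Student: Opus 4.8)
The plan is to pass from the mean width to the support function $h$. Using $W(\l K)-W((\l K)_L)=\tfrac{2}{\omega_d}\int_{\S^{d-1}}\bigl(h_{\l K}(u)-h_{(\l K)_L}(u)\bigr)\,du$ and taking expectations, it suffices to analyse for each fixed $u$ the \emph{gap} $\D(u)=h_{\l K}(u)-h_{(\l K)_L}(u)\ge 0$, which is the height of the largest cap $C(u,t)=\{x\in\l K:\langle x,u\rangle> h_{\l K}(u)-t\}$ that contains no point of $L$. Since $\D(u)=\int_0^\infty \1\{C(u,t)\cap L=\emptyset\}\,dt$, Fubini gives $\E[\D(u)]=\int_0^\infty \P[C(u,t)\cap L=\emptyset]\,dt$. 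The whole problem thus reduces to estimating the probability that a cap misses the lattice, and the single fact I will use about $L$ is that a random translate of a covolume-one lattice satisfies $\E[\#(A\cap L)]=V_d(A)$ for every measurable $A$ (rotation being volume-preserving).

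For the \textbf{upper bound} I would argue deterministically. The covering radius of $L$ equals that of $\Z^d$, namely $\sqrt d/2$, for every rotation and shift. I claim $(\l K)_L\supseteq(\l K)_{-c_d}$ with $c_d=2\sqrt d$: if some $x\in(\l K)_{-c_d}$ were not in the integer hull, a separating hyperplane $H$ through $x$ would put all of $\l K\cap L$ on one side, yet the half-ball $B(x,c_d)\cap H^+\subseteq\l K$ contains a full ball of radius $c_d/2>\sqrt d/2$, hence a lattice point, a contradiction. To bound $W(\l K)-W((\l K)_{-c_d})$ I would dilate towards an incenter $o$ with inradius $\ge \l\,r(K)$: the map $y\mapsto(1-\tfrac{c_d}{\l r(K)})y+\tfrac{c_d}{\l r(K)}o$ sends $\l K$ into $(\l K)_{-c_d}$ and moves each point by at most $c_d\,\mathrm{diam}(K)/r(K)=:\rho(K)$, independent of $\l$. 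Hence $\l K\subseteq(\l K)_{-c_d}\oplus\rho(K)B^d$, so by Minkowski linearity of the mean width $W(\l K)\le W((\l K)_{-c_d})+2\rho(K)$, and $\g_2(K)=2\rho(K)$ works.

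For the \textbf{lower bound} nonnegativity of $\D(u)$ lets me keep only a favourable set of directions. At almost every $u$ the supporting point of $\l K$ carries a positive, finite Gauss curvature; equivalently the product of radii of curvature $\k^{-1}(u)$ is finite a.e., its integral over $\S^{d-1}$ being a finite multiple of $V_d(K)$. Fix $M$ with $|U_M|\ge\omega_d/2$, where $U_M=\{u:\k^{-1}(u)\le M\}$. The cap-volume asymptotics at a point of curvature $\k_K$ give, after scaling, $V_d(C(u,t))\le c\,\l^{(d-1)/2}M^{1/2}t^{(d+1)/2}$ for $u\in U_M$ and small $t$. Choosing $t=c'\l^{-(d-1)/(d+1)}$ makes this volume at most a prescribed $\e$, whence by Markov and the count identity $\P[C(u,t)\cap L\neq\emptyset]\le\E[\#(C(u,t)\cap L)]=V_d(C(u,t))\le\e$. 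Thus $\E[\D(u)]\ge t(1-\e)$ on $U_M$, and integrating over $U_M$ yields $W(\l K)-\E W((\l K)_L)\ge\g_1(K)\,\l^{-(d-1)/(d+1)}$.

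The \textbf{main obstacle} is the uniform cap estimate: the bound $V_d(C(u,t))\le c\,\l^{(d-1)/2}M^{1/2}t^{(d+1)/2}$ rests on the second-order (ellipsoidal) approximation of $\bd(\l K)$ near the supporting point, which a priori holds only pointwise in $u$ and only for $t$ smaller than a threshold depending on $u$; here $t\to0$ as $\l\to\infty$, but I must control the error uniformly across a positive-measure set of directions simultaneously. I expect to resolve this by an Egorov-type argument: the normalized caps converge to the corresponding ellipsoidal caps for a.e.\ $u$, hence uniformly on a subset of $U_M$ of measure still at least $\omega_d/4$, on which the displayed inequality holds with a single constant for all $\l\ge\l(K)$. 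Everything else (measurability of $\k^{-1}$, the integral formula, and the elementary probability bounds) is routine.
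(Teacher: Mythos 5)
Your reduction to cap--avoidance probabilities via $\E[\D(u)]=\int_0^\infty \P[C(u,t)\cap L=\emptyset]\,dt$ is exactly the paper's Lemma~\ref{le:EW=intP}, and your two lattice inputs (covering radius $\sqrt d/2$ for the upper bound; $\P[A\cap L=\emptyset]\ge 1-V_d(A)$ via the first-moment identity for the lower bound) are the same ones the paper uses. Your upper bound is correct and complete: the paper instead inscribes in each cap of width $\tau(K)$ a cone containing a ball of radius $\sqrt d/2$ (Lemma~\ref{le:L_meets_cap}), but your inner-parallel-body/homothety argument is an equivalent deterministic route to a constant bound on $h_{\l K}-h_{(\l K)_L}$. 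The genuine divergence is in the lower bound, precisely at the step you flag as the main obstacle. The paper does not touch curvature at all: it invokes a dual Blaschke rolling theorem (B\"or\"oczky--Fodor--Hug, quoted as Lemma~\ref{le:rollingball}), which produces a \emph{single} radius $R$ and a fixed positive-measure set $\Sigma\subset\S^{d-1}$ such that $K\subset p+R(B^d-u)$ for every $u\in\Sigma$. That containment is global, so $(\l K)_{t,u}\subset B(0,\l R)_{t,u}$ for \emph{all} $t$, and the cap-volume bound $c_2(\l R)^{(d-1)/2}t^{(d+1)/2}$ holds with one constant for all $\l$ and all $u\in\Sigma$ simultaneously; no uniformity question ever arises. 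Your substitute (a.e.\ second-order behaviour, $U_M$, Egorov) can probably be pushed through, but as written it has two soft spots: (i) bounding the \emph{product} of the radii of curvature by $M$ does not control the cap volume --- you need every individual radius bounded, since a single flat direction (locally $y\ge x^4$, say) produces caps of volume much larger than $t^{(d+1)/2}$; this is repairable because Alexandrov's theorem makes each radius a.e.\ finite; and (ii) the pointwise statement $V_d(K_{s,u})\,s^{-(d+1)/2}\to c\,(\prod_i r_i(u))^{1/2}$ for a.e.\ $u$, which your Egorov step presupposes, is itself a nontrivial fact requiring proof or citation. So your route trades one citation (the dual rolling theorem) for another plus additional measure-theoretic work; the paper's choice is the cleaner of the two. (Minor point: $\int_{\S^{d-1}}\prod_i r_i(u)\,du$ is tied to the surface area rather than to $V_d(K)$, but nothing in your argument depends on that.)
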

Hence in the planar case, \eqref{eq:BarMat} can be complemented by an inequality for the perimeter difference:
\begin{equation*}
 \g_1 (K) \underbrace{P(\l K)^{- \frac 13}}_{ \approx \l^{-\frac{1}{3}}}
\leq 
P(\l K)- \E P((\l K)_L) 
\leq
\g_2 (K) .
\end{equation*}

Note that the upper bound of Theorem \ref{thm:genbounds} can be generalized to all intrinsic volumes.
\begin{corollary}\label{cor:intrinsicvol}
Let $K$ be a convex body. Then there is a constants $\g_2(K)$, such that 
$$ V_j(\l K)-\E (V_j((\l K)_L)) \leq \g_2 (K) V_{j-1}(\l K)$$
for all $j \in \{1, \dots, d\}$, as long as $\l \geq \l(K)$.
\end{corollary}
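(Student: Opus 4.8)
The plan is to reduce the assertion to the one-dimensional (mean width) estimate by means of Kubota's projection formulas, recovering the gain of one order in $\l$ through a first-order mixed volume inequality carried out inside each projecting subspace. Throughout write $P=(\l K)_L$ and recall $P\subseteq \l K$, so by monotonicity of the intrinsic volumes the left-hand side is nonnegative and only an upper bound is needed. The single analytic input I would take from the proof of Theorem~\ref{thm:genbounds} is the \emph{pointwise} expected gap bound for the support functions,
$$\E\big(h_{\l K}(u)-h_P(u)\big)\le C(K),\qquad u\in\S^{d-1},\ \l\ge\l(K);$$
integrating it against the uniform measure on $\S^{d-1}$ is precisely the upper bound of the theorem, and the random-lattice analysis behind that theorem establishes the uniform-in-$u$ version.

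By Kubota's formula there is a constant $c_{d,j}>0$ with
$$V_j(\l K)-V_j(P)=c_{d,j}\int_{G(d,j)}\big(\mathrm{vol}_j(\l K\,|\,E)-\mathrm{vol}_j(P\,|\,E)\big)\,d\nu_j(E),$$
where $\l K\,|\,E$ and $P\,|\,E$ denote the orthogonal projections onto $E\in G(d,j)$, with $P\,|\,E\subseteq \l K\,|\,E$. First I would bound the integrand inside a fixed $E\cong\R^j$. For nested convex bodies $A'\subseteq A$ in $\R^j$, Minkowski's first inequality gives $V(A'[1],A[j-1])\ge V_j(A')^{1/j}V_j(A)^{(j-1)/j}\ge V_j(A')$, whence
$$V_j(A)-V_j(A')\le V_j(A)-V(A'[1],A[j-1])=\tfrac1j\int_{\S^{j-1}_E}\big(h_A(v)-h_{A'}(v)\big)\,dS_{j-1}(A;dv).$$
The crucial feature is that the surface area measure $S_{j-1}(A;\cdot)$ is attached to the \emph{larger} body $A=\l K\,|\,E$, hence is deterministic; only the integrand is random. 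Since $h_{\l K|E}(v)=h_{\l K}(v)$ and $h_{P|E}(v)=h_P(v)$ for $v\in\S^{j-1}_E\subseteq\S^{d-1}$, that integrand is exactly the ambient support function gap evaluated in the directions lying in $E$.

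Taking expectations, inserting the pointwise bound, and using $S_{j-1}(\l K|E;\S^{j-1}_E)=2\,V_{j-1}(\l K|E)$ gives $\E\big(\mathrm{vol}_j(\l K|E)-\mathrm{vol}_j(P|E)\big)\le \tfrac{2C(K)}{j}\,V_{j-1}(\l K|E)$. Reinserting this into the Kubota representation and applying Kubota's formula a second time, now in the form $\int_{G(d,j)}V_{j-1}(\l K|E)\,d\nu_j(E)=c_{d,j,j-1}^{-1}V_{j-1}(\l K)$, yields the claim with $\g_2(K)=\tfrac{2C(K)}{j}\,c_{d,j}\,c_{d,j,j-1}^{-1}$. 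Apart from the pointwise input, every step is soft: monotonicity, Minkowski's first inequality, and two applications of Kubota's formula.

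The main obstacle is obtaining the uniform pointwise expected gap bound; this is the genuine content and is exactly what the random-lattice analysis underlying Theorem~\ref{thm:genbounds} must supply. The integrated mean width statement alone does not suffice here, because the measures $S_{j-1}(\l K|E;\cdot)$ may be singular—atomic for polytopes, concentrating mass in the directions normal to facets—and so cannot be dominated by a constant multiple of the uniform measure on $\S^{j-1}_E$. It is precisely the extremal polytopal case that forces the use of the stronger uniform-in-$u$ estimate rather than its spherical average.
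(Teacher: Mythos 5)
Your proof is correct and follows essentially the same route as the paper: Kubota's formula combined with a per--subspace estimate that converts a uniform support-function gap into a bound by $V_{j-1}$ (the paper asserts the inequality $V_j(\l K|_G)-V_j((\l K)_L|_G)\le 2\tau(K)V_{j-1}(\l K|_G)$ directly; your Minkowski-first-inequality computation is just the standard proof of that assertion, with the harmless extra factor $1/j$). The pointwise bound you flag as the main obstacle is exactly what Lemma~\ref{le:L_meets_cap} supplies --- in fact deterministically, $h_{\l K}(u)-h_{(\l K)_L}(u)\le\tau(K)$ for every $u\in\S^{d-1}$ and every lattice $L$, i.e.\ the Hausdorff bound \eqref{eq:dH<tau} of Theorem~\ref{th:genupperbound} --- so no further random-lattice analysis is required.
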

Yet it is clear from \eqref{eq:BarMat} that this inequality is not optimal in general. The area difference is maximized by smooth convex sets, in contrast to the mean width difference which is maximized by polytopes. It would be of high interest to generalize the results of B\'ar\'any and Matou\v{s}ek \eqref{eq:BarMat} and Theorem \ref{thm:genbounds} to sharp inequalities for all intrinsic volumes.

\medskip
Theorem \ref{thm:genbounds} concerning the mean width is optimal as shown by polytopes and smooth convex sets.
\begin{theorem}\label{thm:pol}
Let $P$ be a polytope. Then there is a constant $ \g (P)>0$ such that
$$
\lim_{\l \to \infty} W(\l P) - \E (W((\l P)_L))
= \g (P)  .
$$
\end{theorem}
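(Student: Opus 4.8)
The plan is to understand how a randomly rotated and shifted lattice $L$ interacts with a fixed polytope $P$, and then to show that the expected mean width difference $W(\l P) - \E W((\l P)_L)$ stabilizes to a positive constant as $\l \to \infty$. First I would recall that the mean width is additive in the direction-averaged support function: for each $u \in \S^{d-1}$ the width difference $W(\l P, u) - W((\l P)_L, u)$ measures how far the extreme lattice points of $(\l P)_L$ in direction $u$ fall short of the two supporting hyperplanes of $\l P$. The key geometric observation is that near a \emph{facet} of $\l P$ whose outer normal equals $u$, the supporting hyperplane of $\l P$ is flat, and the lattice points $L \cap \l P$ closest to this facet lie in a slab of constant thickness (independent of $\l$). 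Thus the loss $W(\l P, u) - W((\l P)_L, u)$ for $u$ equal to a facet normal is governed by the distance from the supporting hyperplane to the nearest lattice hyperplane, which does not decay in $\l$. For $u$ ranging over a neighbourhood of a facet normal, one gets a contribution of constant order.

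The natural approach is therefore to decompose $\S^{d-1}$ according to which vertex, edge, or facet of $P$ carries the outer normal cone containing $u$, and to analyse the expected width loss on each piece. I would first treat the generic directions $u$ lying in the interior of a facet normal cone (a set of measure zero in $\S^{d-1}$ but with a neighbourhood of positive measure) and the directions in normal cones of lower-dimensional faces. The dominant contribution comes from directions near facet normals: here, after randomly rotating and shifting, the nearest lattice hyperplane to the supporting hyperplane of $\l P$ is at an asymptotically stationary random distance, because the lattice spacing in direction $u$ is fixed and the shift modulo the lattice equidistributes. Conditioning on the (random) direction and using that the lattice is rotation-invariant in distribution, I would compute the expected shortfall in each direction and integrate over $\S^{d-1}$.

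Concretely, the key steps in order are: (i) fix a realization of the rotation, reducing to a fixed generic lattice orientation relative to $P$; (ii) for each facet $F$ of $P$ with outer normal $u_F$, and for directions $u$ in a small cap around $u_F$, show that the expected width loss converges, as $\l\to\infty$, to a positive limit depending only on the local geometry of $F$ and the distribution of the lattice offset modulo $L$ in direction $u$; (iii) show that the contribution from directions in normal cones of faces of dimension $\le d-2$ (edges, vertices, etc.) is negligible or also converges to an explicit limit, so that it does not destroy convergence; (iv) use the upper bound of Theorem \ref{thm:genbounds} to justify dominated convergence when passing the limit $\l\to\infty$ under the integral over $\S^{d-1}$ and under the expectation over $L$; and (v) assemble the facet-by-facet limits into the single positive constant $\g(P)$. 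The uniform integrability in step (iv) is exactly where Theorem \ref{thm:genbounds} earns its keep: it provides a $\l$-independent dominating bound $\g_2(K)$ for the integrand.

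The hard part will be step (ii): making precise the claim that, near a facet normal, the randomized lattice places a point within a \emph{bounded} distance of the supporting hyperplane, and that the expected value of this distance has a genuine positive limit rather than oscillating or degenerating. This requires a careful equidistribution argument for the lattice offset modulo $L$ as $\l$ grows, combined with control of the transverse fluctuation of the extreme point as $u$ varies over the cap (so that the extreme point tracks the facet rather than jumping to an adjacent face). A secondary difficulty is ensuring that the limits from different facets do not interfere on the boundaries between their normal cones; I expect to handle this by noting that these boundary directions form a set of measure zero on $\S^{d-1}$ and contribute nothing in the limit, so the total constant $\g(P)$ is simply the sum of the per-facet integrals, which is manifestly positive.
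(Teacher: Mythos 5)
There is a genuine gap: your proposal misidentifies where the mean width loss comes from, and the central heuristic of step (ii) fails for a randomly \emph{rotated} lattice. On the sphere, the normal cone of a facet is a single direction $u_F$, and the normal cones of all faces of positive dimension together form a null set; only the normal cones $N(v)$ of the \emph{vertices} have positive spherical measure. Moreover, for $u=u_F$ the cap $(\l P)_{t,u}$ is a slab over the whole facet with volume of order $t\,\l^{d-1}$, so by Lemma \ref{le:upper-bound-P} the probability that it avoids $L$ decays and the width loss in such directions tends to $0$. Your picture of a ``nearest lattice hyperplane at an asymptotically stationary distance, because the lattice spacing in direction $u$ is fixed'' is the picture for the \emph{unrotated} integer lattice and a rational facet normal; after a Haar-random rotation the projections of $L$ onto $u_F$ are almost surely dense in $\R$ and no such family of lattice hyperplanes exists. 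Consequently the facet contributions you designate as dominant are negligible, while the vertex contributions you propose to treat as possibly negligible in step (iii) are in fact the entire limit.

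The paper's proof accordingly runs through the vertices: by Lemma \ref{le:EW=intP} and Lemma \ref{le:L_meets_cap} the difference equals $\sum_{v}\int_{N(v)}\int_0^{\tau(P)}\P((\l P)_{t,u}\cap L=\emptyset)\,dt\,du$, dominated convergence applies on this bounded domain with the trivial bound $\P\le 1$ (no equidistribution argument is needed), and translation invariance of $dL$ lets one replace $\l P$ by $\l(P-v)$, which converges to the tangent cone $C_v$ of $P$ at $v$. The limit is $\g(P)=\sum_v\int_{N(v)}\int_0^{\tau(P)}\P((C_v)_{t,u}\cap L=\emptyset)\,dt\,du$, and its positivity follows from Lemma \ref{le:lower-bound-P} because $(C_v)_{t,u}$ is a pyramid of height $t$ whose volume tends to $0$ as $t\to 0$. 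None of the delicate ingredients you anticipate (equidistribution of the lattice offset, control of the extreme point as $u$ varies over a cap) is required, but the two ingredients your outline omits --- the tangent-cone limit at each vertex and a lower bound for the avoidance probability of small cone caps --- are exactly what makes the argument close.
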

\begin{theorem}\label{thm:smooth}
Assume $K $ is a smooth convex body. Then there is a constant $\g_3 (K)$  such that
$$
W(\l K) - \E (W((\l K)_L)) \leq
\g_3 (K)  \l^{-\frac{d-1}{d+1}}
$$
for $\l$ sufficiently large.
\end{theorem}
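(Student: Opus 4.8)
The plan is to reduce the mean width difference to a one-dimensional integral over the sphere of expected support function deficits, and then to exploit the fact that for smooth bodies the caps grow so quickly that an empty cap must be shallow. Writing $h(M,u)=\sup_{x\in M}\langle x,u\rangle$ for the support function, the width in direction $u$ is $W(M,u)=h(M,u)+h(M,-u)$, so after relabelling $u\mapsto -u$ in one term and using $(\l K)_L\subseteq\l K$ together with Fubini,
$$
W(\l K)-\E\,W((\l K)_L)=\frac{2}{\omega_d}\int_{\S^{d-1}}\E\bigl(h(\l K,u)-h((\l K)_L,u)\bigr)\,du .
$$
For fixed $u$ one has $h((\l K)_L,u)=\max\{\langle p,u\rangle:p\in L\cap\l K\}$, so the deficit equals the largest $t$ for which the cap $C(u,t)=\{x\in\l K:\langle x,u\rangle>h(\l K,u)-t\}$ contains no point of $L$. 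By the layer-cake formula,
$$
\E\bigl(h(\l K,u)-h((\l K)_L,u)\bigr)=\int_0^\infty\P\bigl(C(u,t)\cap L=\emptyset\bigr)\,dt .
$$

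The geometric input is that smoothness yields a rolling ball: there is $r=r(K)>0$ such that at every boundary point of $\l K$ an inscribed ball of radius $\l r$ touches from inside. Comparing $C(u,t)$ with the corresponding spherical cap of $\l r B^d$ gives, for $t$ below a constant threshold,
$$
V_d\bigl(C(u,t)\bigr)\ge c_1\,(\l r)^{(d-1)/2}\,t^{(d+1)/2}.
$$
I then define the critical depth $t^\ast=c_2\,\l^{-(d-1)/(d+1)}$ by requiring $V_d(C(u,t^\ast))=\Theta(1)$; this is precisely the scale appearing in the theorem.

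Splitting the inner integral at $t^\ast$, on $[0,t^\ast]$ I bound the probability by $1$, contributing at most $t^\ast=O(\l^{-(d-1)/(d+1)})$. For the tail I would establish a decay estimate $\P(C(u,t)\cap L=\emptyset)\le g\bigl(V_d(C(u,t))\bigr)$ with $g$ decaying fast enough (super-polynomially) that, after inserting the volume lower bound above and substituting $s=V_d(C(u,t))$, the contribution of $[t^\ast,\infty)$ is again $O(\l^{-(d-1)/(d+1)})$. Integrating over $u\in\S^{d-1}$ and absorbing the curvature and inradius constants into a single factor then produces the asserted bound $\g_3(K)\,\l^{-(d-1)/(d+1)}$.

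The delicate point is the tail bound on the empty-cap probability. For the randomly shifted lattice of covolume one, a convex set of volume below one misses the lattice with probability $1-V_d(C)$, but once the cap wraps around the fundamental torus this estimate is useless, and one must quantify how a \emph{fat} cap becomes equidistributed. I would handle this by packing into $C(u,t)$ a family of $\asymp V_d(C(u,t))$ pairwise disjoint translates of a fixed cell and bounding the probability that all of them are empty, using the averaging over the random rotation to decorrelate the relevant events; alternatively, for $t$ of order $\l r$ the cap already contains a ball large enough to force a lattice point deterministically. Controlling this decorrelation with a rate sufficient to beat the $t^{(d+1)/2}$ growth of the cap volume is where the main work lies.
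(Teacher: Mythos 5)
Your overall strategy coincides with the paper's: the layer-cake identity for the support-function deficit, the rolling-ball comparison of caps of $\l K$ with caps of a ball of radius $\l r$, the volume lower bound $V_d(C(u,t))\gtrsim (\l r)^{(d-1)/2}t^{(d+1)/2}$, and the split of the $t$-integral at the critical depth $t^\ast\asymp\l^{-(d-1)/(d+1)}$ are all exactly the steps the paper takes. (One shared caveat: a rolling ball of uniform radius does not follow from mere $C^1$-smoothness; the paper's actual hypothesis is $K\in\cK(r)$, and you should state the same quantitative assumption.)

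The genuine gap is the tail estimate, which you explicitly leave as ``where the main work lies.'' Your proposed packing-and-decorrelation argument is only a sketch, and it is not clear it produces any usable decay; as written, the proof is incomplete at its one essential analytic step. The paper closes this gap by quoting a theorem of B\'ar\'any (stated in B\'ar\'any--Matou\v{s}ek and proved in \cite{Bar2007}): there are constants $\nu, c>0$ depending only on $d$ such that $\P(C\cap L=\emptyset)\le c/V_d(C)$ for every convex body $C$ with $V_d(C)\ge\nu$. Note also that you are over-asking: super-polynomial decay in the cap volume is not needed. Since the cap volume grows like $t^{(d+1)/2}$ with exponent $(d+1)/2>1$, the polynomial bound $c/V_d(C(u,t))\lesssim (\l r)^{-(d-1)/2}t^{-(d+1)/2}$ is already integrable on $[t^\ast,\infty)$ and, after the substitution $t=\l^{-(d-1)/(d+1)}x$, contributes a constant independent of $\l$. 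So the correct completion of your argument is to replace your conjectural decorrelation estimate by this known $1/V_d$ bound; attempting to prove a stronger decay from scratch is both unnecessary and, with the tools you describe, unlikely to succeed.
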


Thus in the planar case and for smooth convex bodies, the mean width difference is of order $\l^{- \frac 13}$ which tends to zero, and by the result of B\'ar\'any and Matou\v{s}ek \cite{BarMat2005} the volume difference is of order $\l^{\frac 32}$ which tends to infinity.
To compare the two results heuristically, one should check that the volume difference is approximately the perimeter $P(\l K)$ times the mean distance of $\l K$ and $( \l K)_L$,  which is the mean width,
$$
V_2(\l K) - \E V_2 ((\l K)_L) \approx 
P(\l K) \left( W(\l K)- W((\l K)_L) \right)   \approx \l \, \l^{- \frac 13}  = \l^{\frac{2}{3}} 
$$
and thus these results fit together nicely. This simple observation breaks down for polytopes.

\bigskip
The paper is organized in the following way: Section \ref{sec:notations} and Section \ref{sec:basic} contain basic facts, in Section \ref{sec:general} we prove Theorem \ref{thm:genbounds} and Corollary \ref{cor:intrinsicvol}, Section \ref{sec:smooth} is devoted to smooth convex sets, and Section \ref{sec:polytopes} to investigations for polytopes.


\section{Notations}\label{sec:notations}

We work in $d$-dimensional Euclidean space $\R^d$ with inner product $\langle \cdot, \cdot \rangle$, and denote by $B^d$ its unit ball and by $\S^{d-1}= \bd B^d$ the unit sphere. Here $\bd K$ is the boundary of a set $K \subset \R^d$. 
By $B(x,r)$ we denote a ball with center $x$ and radius $r$.
The volume of $B^d$ is $\k_d$, and the spherical Lebesgue- or Hausdorff-measure of $\S^{d-1}$ is $\omega_d=d \k_d$.

Let $\cL$ be the set of rotated and translated integer lattices in $\R^d$, 
$$
\cL = 
\left\{L_{t,\rho}=\rho(\Z^d + t) \colon t\in [0,1)^d,\rho \in SO(d)  \right\} .
$$

For $A \subset \R^d$ we write $\int_{A} f(x) \, dx$ for integration with respect to the $d$-dimensional Lebesgue measure, 
and analogously $\int _A f(u)\, du$ for integration with respect to spherical Lebesgue measure for $A \subset \S^{d-1}$, 
$\int _A f(\rho)\, d\rho$ for integration with respect to the Haar probability measure for $A \subset SO_d$, 
and 
$$
\int\limits_{\cL} f(L) \, dL= \int\limits_{[0,1]^d} \int\limits_{SO_d} f(L_{t,\rho}) \, d\rho \, dt .
$$ 
Thus the \lq uniform measure\rq\  on $\cL$ is given by uniformly chosen $t\in [0,1]^d$ and $\rho \in SO_d$, and for a set $A \subset \cL$ we have 
$$\P( L \in A)= \int\limits_{\cL} \1 ( L \in A)\, dL  ,\ \  
\E f(L) = \int\limits_{\cL} f(L) \, dL \ ,$$
where $\1(\cdot)$ denotes the indicator function.

The convex hull of a set $A$ is denoted by $ [A]$, $\cK^d$ denotes the set of convex bodies, i.e. compact convex sets with nonempty interior, $\cP^d \subset \cK^d$ the set of convex polytopes.  For given $K \subset \cK^d$, the {\it randomized integer convex hull} is the random polytope defined by 
$$K_L:=[K\cap L],$$
where $L\in \mathcal{L}$ is chosen uniformly.

We are interested in the distance between $K$ and $K_L$. To define the distance let 
$$h_K(u)= \max \{ \langle x , u \rangle \colon x \in K \}$$
be the support function of $K \in \cK^d$ in direction $u \in \S^{d-1}$.
The Hausdorff distance $d_H(K,Q)$ between two convex bodies $K, Q$ is given by
\begin{equation}\label{def:dH}
d_H(K,Q) = \max_{u \in \S^{d-1}} |h_K(u)-h_Q(u)|.
\end{equation}

Note that 
$ H_K(u) = \{x \in \R^d \colon \langle x, u \rangle \geq h_K(u)  \}$
is a supporting halfspace to $K$ with unit normal vector $u$.
For each $u\in\S ^{d-1}$ and $t\in [0;+\infty)$, we denote by $K_{t,u}$ the cap of width $t$ cut off from $K$ by a halfspace parallel to $H_K(u)$,
$$
K_{t,u} = \{x \in K \colon \langle x, u \rangle \geq h_K(u)-t  \} .
$$

For $u\in \S ^{d-1}$, the width of $K$ in direction $u$ is defined by
$$W(K,u)=h_K(u)+h_K(-u) ,$$
and the mean width of $K$ is
$$W(K)=\frac{1}{\omega_d}\int\limits_{\S ^{d-1}}W(K,u)du=\frac{2}{\omega_d}\int\limits_{\S ^{d-1}}h_K(u)du . $$

\section{Basic results}\label{sec:basic}

We are interested in the distance between $K$ and $K_L$ measured in terms of the difference of the mean width $W(K)-W(K_L)$. Observe that since $K_L \subset K$ this difference is always postive and equals zero if and only if $K=K_L$. We start with a simple but crucial lemma.

\begin{lemma}\label{le:EW=intP}
Assume that $K \in \cK^d$. Then
$$ W(K)-\E (W(K_L))
=
\frac{2}{\omega_d}\int\limits_{\S ^{d-1}}\int\limits^{\infty}_{0} \P (K_{t,u}\cap L=\emptyset)dtdu
. $$
\end{lemma}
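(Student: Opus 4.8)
The plan is to reduce the mean-width difference to a one-dimensional statement about support functions and then rewrite the gap between the support functions of $K$ and $K_L$ as an integral over the caps $K_{t,u}$. Since $W(K)=\frac{2}{\omega_d}\int_{\S^{d-1}}h_K(u)\,du$ and the convex-hull operation leaves support functions unchanged, we have $h_{K_L}(u)=\max\{\langle x,u\rangle : x\in K\cap L\}$ whenever $K\cap L\neq\emptyset$, so that
$$
W(K)-W(K_L)=\frac{2}{\omega_d}\int\limits_{\S^{d-1}}\bigl(h_K(u)-h_{K_L}(u)\bigr)\,du .
$$
Because $K_L\subseteq K$ the integrand is nonnegative, which will later allow me to apply Tonelli's theorem without integrability concerns.

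The heart of the argument is a pointwise identity: for each fixed direction $u$ and each realization $L$ with $K\cap L\neq\emptyset$,
$$
h_K(u)-h_{K_L}(u)=\int\limits_0^\infty \1(K_{t,u}\cap L=\emptyset)\,dt .
$$
I would prove this by noting that the cap $K_{t,u}=\{x\in K\colon \langle x,u\rangle\geq h_K(u)-t\}$ contains a point of $L$ exactly when some $x\in K\cap L$ satisfies $\langle x,u\rangle\geq h_K(u)-t$, that is, precisely when $h_{K_L}(u)\geq h_K(u)-t$. Hence $K_{t,u}\cap L=\emptyset$ if and only if $t<h_K(u)-h_{K_L}(u)$, and the indicator integrates in $t$ to exactly $h_K(u)-h_{K_L}(u)$.

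It then remains to take expectations and interchange the order of integration. Using the pointwise identity and Tonelli's theorem, $\E\bigl(h_K(u)-h_{K_L}(u)\bigr)=\int_0^\infty \E\,\1(K_{t,u}\cap L=\emptyset)\,dt=\int_0^\infty \P(K_{t,u}\cap L=\emptyset)\,dt$, and a further interchange with the integration over $u\in\S^{d-1}$ yields the claimed formula. All interchanges are justified because every integrand is nonnegative and jointly measurable in $(t,u,L)$; the measurability of $L\mapsto \1(K_{t,u}\cap L=\emptyset)$ and of $L\mapsto h_{K_L}(u)$ is routine and I would only remark on it.

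The step I expect to require the most care is the exceptional event $\{K\cap L=\emptyset\}$, on which $K_L$ is empty, $h_{K_L}(u)$ is undefined, and the inner integral $\int_0^\infty \1(K_{t,u}\cap L=\emptyset)\,dt$ diverges. For the bodies of interest this is harmless: once $K$ contains a fundamental cell of the lattice in every rotation — in particular for $\l K$ with $\l\geq\l(K)$, whose inradius tends to infinity — one has $\P(K\cap L=\emptyset)=0$, so the pointwise identity holds almost surely and may be integrated without modification. I would state this convention explicitly (equivalently, set $W(\emptyset)=0$ and check that both sides agree) so that the statement is unambiguous in the regime used by the later theorems.
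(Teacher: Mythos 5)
Your proposal is correct and follows essentially the same route as the paper: rewrite the mean width difference via support functions, use the layer-cake identity $h_K(u)-h_{K_L}(u)=\int_0^\infty \1(K_{t,u}\cap L=\emptyset)\,dt$, and interchange integrals by Tonelli/Fubini since the integrand is nonnegative. Your additional remark on the event $\{K\cap L=\emptyset\}$ (which the paper passes over silently) is a reasonable refinement and does not change the argument in the regime where the lemma is applied.
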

\begin{proof}
The difference of the expected mean width of $K$ and $K_L$ is by definition
\begin{align*}
W(K)-\E (W(K_L))
&=
\frac{2}{\omega_d} \ \E \int\limits_{\S ^{d-1}}(h_K(u)-h_{K_L}(u))\, du .
\end{align*}
Since $K_L \subset K $, the integrand is postive, and Fubinis theorem yields
\begin{align}\label{eq:Wdiff}
W(K)-\E (W(K_L))
&= \nonumber
\frac{2}{\omega_d} \ \E \int\limits_{\S ^{d-1}}\int\limits^{\infty}_{0}\1( t \leq h_K(u)-h_{K_L}(u))\, dtdu
\\ &= \nonumber
\frac{2}{\omega_d} \ \E \int\limits_{\S ^{d-1}}\int\limits^{\infty}_{0}\1(K_{t,u}\cap L=\emptyset) \, dtdu
\\ &= 
\frac{2}{\omega_d} \int\limits_{\S ^{d-1}}\int\limits^{\infty}_{0} \P (K_{t,u}\cap L=\emptyset)\, dtdu .
\end{align}
\end{proof}

Hence estimating the mean width difference boils down to estimate the probability that a cap avoids the random lattice $L$.
The following upper bound was stated by B\'ar\'any and Matou\v{s}ek \cite{BarMat2005} 
and proved by  B\'ar\'any \cite{Bar2007}.

\begin{lemma}\label{le:upper-bound-P}
There exist constants $\nu>0$ and $c>0$ (both depending on $d$) such that for every convex body $K \in \cK^d$ with  $V_d(K)\geq \nu $, 
$$ \P (K \cap L =\emptyset)\leq \frac{c}{V_d(K)} $$
holds.
\end{lemma}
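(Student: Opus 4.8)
The plan is to pass from the geometric event $\{K\cap L=\emptyset\}$ to a second moment estimate for the number of lattice points contained in $K$. Fix a rotation $\rho\in SO_d$; then $L_{t,\rho}=\rho(\Z^d+t)$ meets $K$ exactly when the integer lattice meets $\rho^{-1}K-t$. Writing $N=N(t,\rho)=\#(K\cap L)=\sum_{m\in\Z^d}\1(m+t\in\rho^{-1}K)$ for the number of lattice points of $L$ inside $K$, unfolding the torus gives
$$\E N=\int\limits_{[0,1)^d}\#\{m\in\Z^d:\ m+t\in\rho^{-1}K\}\,dt=V_d(\rho^{-1}K)=V_d(K),$$
independently of $\rho$. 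Thus $\{K\cap L=\emptyset\}=\{N=0\}$ is the event that a random variable of mean $V_d(K)$ vanishes, and Chebyshev's inequality yields
$$\P(K\cap L=\emptyset)=\P(N=0)\le \frac{\mathrm{Var}(N)}{(\E N)^2}=\frac{\mathrm{Var}(N)}{V_d(K)^2}.$$
Hence it suffices to establish the variance bound $\mathrm{Var}(N)\le c\,V_d(K)$ for all $K$ with $V_d(K)$ large.

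I would compute the second moment by the same unfolding. Expanding $N^2$ as a double sum over pairs of lattice points and integrating first in $t$ and then in $\rho$, the pair indexed by $k=m'-m$ contributes $\int_{SO_d}V_d(K\cap(K-\rho k))\,d\rho$, so that
$$\E N^2=V_d(K)+\sum_{k\in\Z^d\setminus\{0\}}\int\limits_{SO_d} g_K(\rho k)\,d\rho,$$
where $g_K(v)=V_d(K\cap(K-v))$ is the covariogram of $K$. Since $\int_{\R^d}g_K(v)\,dv=V_d(K)^2$ and the inner integral depends only on $|k|$, writing $G(r)=\int_{SO_d}g_K(\rho k)\,d\rho$ for $|k|=r$ (the spherical average of the covariogram) recasts the variance as a comparison between a lattice sum and an integral,
$$\mathrm{Var}(N)=V_d(K)+\Big(\sum_{k\in\Z^d\setminus\{0\}}G(|k|)-\int\limits_{\R^d}G(|v|)\,dv\Big).$$

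The remaining, and main, task is to show that the lattice sum exceeds the integral by at most $O(V_d(K))$. Here I would exploit that $G$ is radially non-increasing: as $g_K=\1_K*\1_{-K}$ is a convolution of log-concave functions it is itself log-concave and even, hence non-increasing along every ray from the origin, and therefore so is its spherical mean $G$. For such a $G$ one has $G(|k|)\le\int_{k+[-\frac12,\frac12]^d}G\big((|v|-\tfrac{\sqrt d}{2})_+\big)\,dv$, and summing over the tiling cubes bounds $\sum_k G(|k|)$ by $\int_{\R^d}G\big((|v|-\tfrac{\sqrt d}{2})_+\big)\,dv$; the excess of this over $\int_{\R^d}G$ reduces, after integration by parts, to a weighted sum of lower moments of $G$.

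Controlling these moments is the crux, and it is exactly where the random rotation is indispensable. For a fixed orientation the statement is simply false — an axis-parallel thin slab of large volume covers only a vanishing fraction of the torus, so $\P(K\cap L=\emptyset)$ stays bounded away from $0$. Averaging $g_K$ over $SO_d$ repairs this: for an elongated body the covariogram is concentrated in the few directions aligned with its long axes, so $G(r)$ is small for the bulk of radii $r$, and this decay is what keeps the moment estimates, and hence $\mathrm{Var}(N)$, of order $V_d(K)$. I expect the hard part to be establishing this quantitative decay of $G$ uniformly over all convex bodies; a natural route is to reduce via John's theorem to an inscribed ellipsoid $E\subseteq K$ with $V_d(E)\ge c_d V_d(K)$ (so that $\P(K\cap L=\emptyset)\le\P(E\cap L=\emptyset)$), for which the semi-axes render the spherical average $G$ and its moments explicitly computable.
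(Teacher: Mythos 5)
The paper offers no proof of this lemma to compare against: it is quoted from B\'ar\'any--Matou\v{s}ek \cite{BarMat2005} with the proof attributed to B\'ar\'any \cite{Bar2007}, so you are attempting to reprove an external result. Judged on its own terms, your proposal has a genuine gap --- one you partly flag yourself, but which is worse than a missing technical estimate. The parts you do carry out are correct: $\E N=V_d(K)$, the Chebyshev reduction $\P(N=0)\le \mathrm{Var}(N)/V_d(K)^2$, the covariogram identity $\E N^2=V_d(K)+\sum_{k\ne 0}\int_{SO_d}g_K(\rho k)\,d\rho$, and the radial monotonicity of $G$ via log-concavity of $\1_K*\1_{-K}$. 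But the sum-versus-integral majorization you propose cannot close the argument even in the most favourable case. For $K=B(0,R)$, replacing $G(|k|)$ by $\int_{k+[-1/2,1/2]^d}G\bigl((|v|-\tfrac{\sqrt d}{2})_+\bigr)dv$ and summing produces an excess over $\int_{\R^d}G(|v|)\,dv$ of order $\int_0^{2R}|G'(r)|\,r^{d-1}dr\asymp R^{2d-1}$, whereas the true variance is of order $R^{d-1}$ (by Poisson summation, $\mathrm{Var}(N)=\sum_{m\neq 0}|\widehat{\1_{K}}(m)|^2$, and the Fourier transform of a ball decays like $R^{(d-1)/2}|\xi|^{-(d+1)/2}$). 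Your bound is therefore lossy by a factor of exactly $V_d(K)$ and yields only $\P(N=0)\lesssim V_d(K)^{-1/d}$ for balls. The lemma lives on cancellation between the lattice sum and the integral, and a one-sided monotone majorization discards precisely that cancellation.

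The step you defer --- uniform quantitative decay of the rotation-averaged covariogram --- is not a technical remainder but the entire content of the lemma. The John-ellipsoid reduction is sound as far as it goes ($E\subseteq K$ gives $\P(K\cap L=\emptyset)\le\P(E\cap L=\emptyset)$ and $V_d(E)\ge d^{-d}V_d(K)$), but it lands you on the hardest case: a very eccentric ellipsoid of large volume, where whether it meets $L$ is governed by Diophantine properties of the random rotation --- this is the ``relative of Buffon's needle problem'' in the title of \cite{Bar2007}. For such bodies it is not even clear that the variance bound $\mathrm{Var}(N)\le c\,V_d(K)$ you would need is true uniformly in the eccentricity; near-rational orientations produce rare but large resonant counts, and Chebyshev may simply be the wrong tool. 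To repair the approach you would at minimum need to replace the real-space majorization by a Fourier-analytic treatment of the lattice sum together with genuine Diophantine input on the distribution of $\rho\Z^d$ in thin directions; at that point you are essentially reconstructing B\'ar\'any's argument rather than shortcutting it.
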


We give a simple lower bound which turns out to have the right order in the applications we need in this work.
\begin{lemma}\label{le:lower-bound-P}  For any measurable set $A \subset \R^d$ we have
$$ \P (A \cap L =\emptyset)\geq 1- V_d(A) . $$
\end{lemma}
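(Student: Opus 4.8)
The plan is to bound the probability that a set $A$ avoids the random lattice $L$ by comparing it to the expected number of lattice points of $L$ falling inside $A$. The key observation is that the event $\{A \cap L = \emptyset\}$ is the complement of the event that $L$ contains at least one point of $A$, so by the first moment method (Markov's inequality for counting measures) I would write
\begin{equation*}
\P(A \cap L \neq \emptyset) = \P\left( \#(A \cap L) \geq 1 \right) \leq \E\,\#(A \cap L),
\end{equation*}
where $\#(\cdot)$ denotes the number of points. This reduces the problem to computing the expected number of lattice points of $L$ inside $A$, and then taking complements.

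The main step is therefore to establish the \emph{stochastic invariance} of the expected point count, namely that $\E\,\#(A \cap L) = V_d(A)$ for any measurable set $A$. First I would fix the rotation $\rho$ and compute the expectation over the uniform translation $t \in [0,1)^d$. For a fixed rotation, $L_{t,\rho} = \rho(\Z^d + t)$, and summing the indicator that a lattice point lies in $A$ over all $n \in \Z^d$ and integrating over $t \in [0,1)^d$ is a standard unfolding argument: each unit cell $\rho([0,1)^d + n)$ contributes exactly its overlap with $A$, and these cells tile $\R^d$. Thus the integral over $t$ telescopes to $V_d(\rho^{-1} A) = V_d(A)$, since $\rho$ is volume-preserving. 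Integrating the resulting constant over $SO_d$ with respect to the Haar probability measure leaves $V_d(A)$ unchanged.

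Combining these two pieces, I would conclude
\begin{equation*}
\P(A \cap L = \emptyset) = 1 - \P(A \cap L \neq \emptyset) \geq 1 - \E\,\#(A \cap L) = 1 - V_d(A),
\end{equation*}
which is exactly the claimed bound.

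The only delicate point is the unfolding/tiling step in the computation of $\E\,\#(A\cap L)$: one must justify interchanging the infinite sum over $n \in \Z^d$ with the integral over $t$, which is immediate by Tonelli's theorem since all summands are nonnegative, and then verify that the translated cells $\{\rho([0,1)^d + n)\}_{n \in \Z^d}$ partition $\R^d$ up to a null set so that the pieces reassemble into the full volume. Apart from this bookkeeping the argument is elementary, and indeed the bound is crude precisely because Markov's inequality ignores the possibility of multiple lattice points in $A$; it is nonetheless of the right order when $V_d(A)$ is small, which is the regime relevant for the cap estimates in Lemma~\ref{le:EW=intP}.
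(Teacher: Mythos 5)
Your argument is correct and coincides with the paper's own proof: both compute $\E\,\#(A\cap L)=V_d(A)$ by the same unfolding of the sum over $\Z^d$ against the integral over $t\in[0,1)^d$, and both then apply the first-moment bound $\P(A\cap L\neq\emptyset)\leq\E\,\#(A\cap L)$ (the paper writes this as $\sum_i i\,\P(\#=i)\geq\sum_i\P(\#=i)$, which is the same inequality). No substantive differences.
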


\begin{proof} 
We start by calculating the expected number of lattice points in $A$. 
\begin{align*}
\E (\#\{ A \cap L\}) 
=
\int\limits_{\cL}\sum\limits_{z\in L}\1_A(z) dL
&= 
\int\limits_{SO_d}\int\limits_{\ [0,1) ^{d}}\sum\limits_{\omega\in \Z^d}\1_A(\rho(\omega + t)) dtd\rho
\\ &= 
\int\limits_{SO_d}\sum\limits_{\omega\in \Z^d}\int\limits_{\ [0,1) ^{d}+\omega}\1_A(\rho(y)) dyd\rho
\\ &= 
\int\limits_{SO_d}\int\limits_{\R^{d}}\1_A(\rho(y)) dyd\rho
\\ &= 
\int\limits_{SO_d}\int\limits_{\R^{d}}\1_A(x) dxd\rho
= 
V_d(A)
\end{align*}
Therefore,
\begin{align*}
V_d(A)=\E (\# \{A\cap L\} )
&=
\sum^{\infty}_{i=1} i \P(\# \{A\cap L\} =i)
\\ &\geq
\sum^{\infty}_{i=1} \P(\# \{A\cap L\} =i)
\\ &=
\P (A\cap L \neq \emptyset),
\end{align*}
and hence, $\P(A\cap L =\emptyset)=1-\P(A\cap L \neq \emptyset)\geq 1-V_d(A)$.
\end{proof}

\section{General Convex Bodies}\label{sec:general}

In this section we prove bounds for general convex bodies.  We start with the upper bound.  
The following lemma is somehow connected to Khintchin's Flatness Theorem \cite{Khin1948}, see also \cite{KannLov1988}. It states that a cap which is too fat cannot avoid any lattice.

\begin{lemma}\label{le:L_meets_cap}
Let $K$ be a convex body. Then there are constants $ \tau (K), \l(K)$ such that for $t \geq \tau (K)$ and $\l \geq \l(K)$ we have
$$ (\l K)_{t,u} \cap L \neq \emptyset$$
for all $u \in \S^{d-1}$ and all $L \in \cL$.
\end{lemma}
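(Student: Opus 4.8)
The plan is to reduce the statement to a purely metric fact: a cap that is fat enough contains a Euclidean ball large enough to be pierced by \emph{every} lattice in $\cL$. Every $L=\rho(\Z^d+s)\in\cL$ is a rotated and translated copy of $\Z^d$, and the covering radius of $\Z^d$ equals $\frac{\sqrt d}{2}$; since rotations and translations preserve distances, every point of $\R^d$ lies within distance $\frac{\sqrt d}{2}$ of $L$. Hence any closed ball of radius $\frac{\sqrt d}{2}$ meets $L$, uniformly in $L\in\cL$. It therefore suffices to produce constants $\tau(K),\l(K)$ such that for $t\geq\tau(K)$ and $\l\geq\l(K)$ the cap $(\l K)_{t,u}$ contains a ball of radius $\frac{\sqrt d}{2}$ for every $u\in\S^{d-1}$.

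Fix $x_0,r_0,R_0$ with $B(x_0,r_0)\subseteq K\subseteq B(x_0,R_0)$; after scaling, $B(\l x_0,\l r_0)\subseteq \l K\subseteq B(\l x_0,\l R_0)$. For a fixed direction $u$ let $p\in\bd(\l K)$ satisfy $\langle p,u\rangle=h_{\l K}(u)$, and consider the cone $C=[\{p\}\cup B(\l x_0,\l r_0)]\subseteq \l K$. Its half-opening angle $\theta$ at the apex $p$ satisfies $\sin\theta=\l r_0/|p-\l x_0|\geq r_0/R_0$, because $p\in \l K\subseteq B(\l x_0,\l R_0)$ forces $|p-\l x_0|\leq \l R_0$; crucially this lower bound is independent of both $u$ and $\l$. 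A ball centered on the axis of $C$ at distance $\ell$ from $p$ and of radius $\ell\sin\theta$ lies inside $C$, so choosing $\ell$ equal to a fixed multiple of $\frac{\sqrt d}{2}$ depending only on $r_0/R_0$ yields a ball of radius $\frac{\sqrt d}{2}$ inside $C$. Placing it at this fixed depth below the supporting hyperplane $\{\langle\,\cdot\,,u\rangle=h_{\l K}(u)\}$ makes it fit inside the slab, hence inside the cap, as soon as $t\geq\tau(K)$ for a suitable constant $\tau(K)$, while $\l\geq\l(K)$ guarantees that this fixed depth still lies in the conical part of $C$ (above the circle along which $C$ is tangent to the inscribed sphere), so that the construction is valid simultaneously for all $u$.

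Combining the two steps, for $t\geq\tau(K)$ and $\l\geq\l(K)$ the cap $(\l K)_{t,u}$ contains a ball of radius $\frac{\sqrt d}{2}$, which meets every $L\in\cL$; therefore $(\l K)_{t,u}\cap L\neq\emptyset$ for all $u$ and all $L$, as claimed.

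I expect the main obstacle to be the uniformity of the inscribed-ball estimate over all directions $u$: the local shape of $\bd(\l K)$ near the apex $p$ changes drastically with $u$ (nearly flat, facet-like pieces versus strongly curved pieces), and one must ensure the lower bound on the inradius of the cap degenerates into neither a $u$-dependent nor a $\l$-dependent constant. The cone construction circumvents a case analysis precisely because it uses only the inscribed ball $B(\l x_0,\l r_0)$ and the apex $p$, whose relative position is governed by the single ratio $r_0/R_0$; the role of the hypothesis $\l\geq\l(K)$ is exactly to make the conical region deep enough to accommodate a ball of the required fixed radius $\frac{\sqrt d}{2}$. This is the point where the argument is \lq\lq somehow connected\rq\rq\ to Khintchine's flatness theorem: fatness in every direction, rather than in one, is what forces a lattice point.
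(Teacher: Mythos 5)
Your proposal is correct and follows essentially the same route as the paper: both reduce the claim to the fact that every ball of radius $\tfrac{\sqrt d}{2}$ meets every $L\in\cL$ (the covering radius of $\Z^d$), and both inscribe such a ball in a cone joining the touching point in direction $u$ to the inball of $K$, with $\l\geq\l(K)$ ensuring the cone is deep enough and $t\geq\tau(K)$ ensuring the ball sits inside the cap. The only cosmetic difference is that the paper uses the cone over the equatorial disk $E\cap u^\perp$ of the inball and its inradius $r(u)$, while you use the full tangent cone and make the uniformity in $u$ explicit through the ratio $r_0/R_0$.
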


\begin{proof}
Assume w.l.o.g. that the inball $E$ of $K$ is centered at the origin, and denote by $x(u) \in \bd K$ a boundary point with outer unit normal vector $u$. Then $K$ contains the cone
$$
C_u=[x(u), E \cap u^\perp]
$$
with base $ E \cap u^\perp$ and apex $x(u)$. Denote by $r(u)$ the radius of the inball of $C_u$. Thus $s r(u)$ is the inball of $s C_u$. Observe that any ball of radius at least $\frac {\sqrt d}2$ meets any lattice $L= \rho(\Z^d+t)$. Thus for
\begin{equation}\label{eq:largeball}
s = \frac{\sqrt d}{2 r(u) }
\end{equation}
the cone $s C_u$ must contain a lattice point.
The essential observation is that $s C_u$ is a cone with height  $s h_K(u)$, and that $ (\l C_u)_{t,u} $ also is a homothetic  copy of $C_u$ with height $t$.
Hence \eqref{eq:largeball} implies
$$
(\l C_u)_{t,u}  \cap L = 
\left( \frac t{h_K(u)} C_u \right)  \cap L 
\neq \emptyset
$$
for 
$$t \geq s h_K(u)= \frac{\sqrt d\, h_K(u)}{2 r(u)}$$
as long as $\l  \geq s=\frac {\sqrt d}{2 r(u)}$. We define
$$
\tau (K)  := \max_{u \in \S^{d-1}} \frac{\sqrt d\, h_K(u)}{2 r(u)}, \ 
\mbox{ and } \ 
\l (K)  := \max_{u \in \S^{d-1}} \frac{\sqrt d }{2 r(u)}
$$
and obtain for $t \geq \tau(K)$ and $\l \geq \l(K)$ 
$$
(\l C_u)_{t,u}  \cap L \neq \emptyset
$$
for all $u \in \S^{d-1}$ and $L \in \cL$. Since $ (\l C_u)_{t,u} \subset (\l K)_{t,u}$ this yields the lemma.

\end{proof}

There are some immediate consequences.
By Lemma \ref{le:L_meets_cap}, for each $u \in \S^{d-1}$ the distance of the support functions of $K$ and $K_L$ is at most $\tau(K)$, which by definition gives a simple upper bound for the Hausorff distance \eqref{def:dH}  and for the mean width difference \eqref{eq:Wdiff}. Putting $\g_2(K)=2 \tau(K)$ this is the stated upper bound in Theorem \ref{thm:genbounds}.

\begin{theorem}\label{th:genupperbound}
Let $K$ be a convex body. Then there is a constant $\tau(K)$ such that for $\l$ sufficiently large
\begin{equation}\label{eq:dH<tau}
 d_H(\l K, (\l K)_L) \leq \tau(K)  
\end{equation}
for any lattice $L \in \cL$, and
$$ W(\l K)-\E (W((\l K)_L)) \leq 2 \tau(K) . $$
\end{theorem}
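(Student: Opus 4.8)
The plan is to derive Theorem \ref{th:genupperbound} as a direct consequence of Lemma \ref{le:L_meets_cap}, which is precisely the ``fat caps must meet the lattice'' statement. The essential point is that Lemma \ref{le:L_meets_cap} bounds uniformly, over all directions $u$ and all lattices $L$, how deep the random polytope $(\l K)_L$ can fail to reach toward the boundary of $\l K$.

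First I would fix $\l \geq \l(K)$ and an arbitrary lattice $L \in \cL$, and examine the support function difference $h_{\l K}(u) - h_{(\l K)_L}(u)$ in a fixed direction $u \in \S^{d-1}$. By the definition of the cap $(\l K)_{t,u}$ and the argument already used in the proof of Lemma \ref{le:EW=intP}, the event that the cap of width $t$ avoids the lattice, $(\l K)_{t,u} \cap L = \emptyset$, is exactly the event $t \leq h_{\l K}(u) - h_{(\l K)_L}(u)$. Now Lemma \ref{le:L_meets_cap} guarantees that whenever $t \geq \tau(K)$ the cap $(\l K)_{t,u}$ does contain a lattice point, so it cannot be that $t \leq h_{\l K}(u) - h_{(\l K)_L}(u)$ for any $t \geq \tau(K)$. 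Hence $h_{\l K}(u) - h_{(\l K)_L}(u) \leq \tau(K)$. Since $(\l K)_L \subseteq \l K$ the difference is nonnegative, so $|h_{\l K}(u) - h_{(\l K)_L}(u)| \leq \tau(K)$; this holds for every $u$ and every $L$, which by the definition \eqref{def:dH} of the Hausdorff distance immediately yields the first assertion $d_H(\l K, (\l K)_L) \leq \tau(K)$.

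For the mean width bound I would integrate the pointwise support function estimate. Using the representation \eqref{eq:Wdiff} from Lemma \ref{le:EW=intP}, the mean width difference equals $\frac{2}{\omega_d} \int_{\S^{d-1}} \int_0^\infty \P((\l K)_{t,u} \cap L = \emptyset)\, dt\, du$. By the uniform cap estimate just established, the inner probability vanishes for $t \geq \tau(K)$ and is trivially at most $1$ for $t < \tau(K)$, so the inner integral over $t$ is bounded by $\tau(K)$. Integrating the constant $\frac{2}{\omega_d}\tau(K)$ over the sphere of total measure $\omega_d$ gives the stated bound $2\tau(K)$.

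I do not expect any genuine obstacle here, since all the hard work is contained in Lemma \ref{le:L_meets_cap}: the present theorem is a packaging step. The only point requiring a little care is the logical translation from ``the cap contains a lattice point for all $t \geq \tau(K)$'' to ``the support function difference is at most $\tau(K)$,'' i.e. confirming that the deepest lattice-free cap has width strictly below $\tau(K)$ and that this supremum is attained appropriately; this follows from the monotonicity of the caps $(\l K)_{t,u}$ in $t$ and the fact that the lattice is a closed set, so no delicate limiting argument is needed.
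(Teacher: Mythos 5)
Your proof is correct and follows essentially the same route as the paper: the authors likewise deduce the pointwise support-function bound $h_{\l K}(u)-h_{(\l K)_L}(u)\leq \tau(K)$ directly from Lemma \ref{le:L_meets_cap} and then read off the Hausdorff-distance bound from \eqref{def:dH} and the mean width bound from \eqref{eq:Wdiff}. Nothing is missing.
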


\bigskip
The intrinsic volumes $V_j(K)$ of a convex body $K$, $j=0, \dots, d$, are defined as the coefficients in the Steiner formula,
$$
V_d(K + \e B^d) = \sum_{i=0}^d \k_i V_{d-i}(K) \e^i \ , 
$$
where e.g. $2V_{d-1}(K)$ is the surface area of $K$, $\frac{2 \kappa_{d-1}}{d \kappa_d} V_1(K)$ equals the mean width $W(K)$, and $V_0(K)=1$ is the Euler characteristic of $K$.
By Kubotas formula, the intrinsic volumes of a convex body can be written as
$$
V_{j} (K)
= c_{d,k,j}^{-1}  \int_{\cG_k^d} V_j(K|_G) \, dG ,$$
$j=0, \dots, k$, where $\cG_k^d$ is the Grassmann manifold of the $k$-dimensional subspaces of $\R^d$, integration is with respect to the Haar probability measure on $\cG^d_k$, and
$$  
 c_{d,k,j}  
 = \frac{k!  (d-j)!\kappa_{d-j} \kappa_k }{  d! (k-j)! \kappa_d \kappa_{k-j} } 
. $$
Because of \eqref{eq:dH<tau}, $ \l K \subset (\l K)_L + \tau(K) B^d$, and this also holds for all projections onto $k$-dimensional subspaces. Hence inequality \eqref{eq:dH<tau}
implies
$$
V_j(\l K|_G) -V_j( (\l K)_L|_G) \leq
\tau(K) \, 2V_{j-1} (\l K|_G),
$$
and Kubotas formula yields an upper bound for the intrinsic volumes.

\begin{corollary}
Let $K$ be a convex body. Then there is a constant $\tau(K)$, such that for $\l$ sufficiently large
$$ V_j(\l K)-\E (V_j((\l K)_L)) \leq \frac{2 c_{d,j-1,j}} {c_{d,j,j}} \tau(K) V_{j-1}(\l K)$$
for all $j \in \{1, \dots, d\}$.
\end{corollary}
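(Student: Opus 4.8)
The plan is to deduce the corollary from the deterministic Hausdorff estimate $d_H(\l K,(\l K)_L)\le\tau(K)$ of Theorem~\ref{th:genupperbound}, pushed down to orthogonal projections, and then to integrate over the Grassmannian by Kubota's formula with subspace dimension $k=j$. Fix $G\in\cG_j^d$ and work inside $G\cong\R^j$, writing $B_G=B^d\cap G$ for its unit ball; there $V_j(\,\cdot\,|_G)$ is the ordinary $j$-dimensional volume and $2V_{j-1}(\,\cdot\,|_G)$ is the surface area. Because $(\l K)_L=[\l K\cap L]\subset\l K$, the projections are nested, $(\l K)_L|_G\subseteq\l K|_G$; and since the support function of a projection onto $G$ agrees on directions $u\in G$ with that of the body itself, \eqref{eq:dH<tau} gives $d_H(\l K|_G,(\l K)_L|_G)\le\tau(K)$ inside $G$. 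Combining, for every $L\in\cL$ and every $\l\ge\l(K)$ one has the sandwich
\[
(\l K)_L|_G\ \subseteq\ \l K|_G\ \subseteq\ (\l K)_L|_G+\tau(K)B_G .
\]

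The central step, which I expect to be the main obstacle, is the purely geometric inequality: if $Q\subseteq Q'$ are convex bodies in $\R^j$ with $Q'\subseteq Q+\tau B_G$, then $V_j(Q')-V_j(Q)\le 2\tau\,V_{j-1}(Q')$. I would prove it through the inner parallel body $Q'\ominus\tau B_G:=\{x\colon x+\tau B_G\subseteq Q'\}$. First I check $Q'\ominus\tau B_G\subseteq Q$: if some $x$ with $x+\tau B_G\subseteq Q'$ satisfied $x\notin Q$, a separating direction $u$ would give $\langle x,u\rangle>h_Q(u)$, while $x+\tau u\in Q'\subseteq Q+\tau B_G$ forces $\langle x,u\rangle\le h_Q(u)$, a contradiction. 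Monotonicity of $V_j$ then gives $V_j(Q')-V_j(Q)\le V_j(Q')-V_j(Q'\ominus\tau B_G)$, and it remains to bound the volume of the inner boundary layer of width $\tau$ by $\tau$ times the surface area of the outer body. This I would do by writing $V_j(Q')-V_j(Q'\ominus\tau B_G)=\int_0^\tau\bigl(-\tfrac{d}{ds}V_j(Q'\ominus sB_G)\bigr)\,ds$ and using that the (one-sided) derivative equals the surface area $2V_{j-1}(Q'\ominus sB_G)$, which is nondecreasing under inclusion and hence at most $2V_{j-1}(Q')$. The delicate point here is the differentiability of $s\mapsto V_j(Q'\ominus sB_G)$ for an inner parallel body; if this is awkward I would instead invoke the standard coarea estimate that the inner layer $\{x\in Q'\colon\operatorname{dist}(x,\R^j\setminus Q')<\tau\}$ has volume at most $\tau$ times the surface area of $Q'$. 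It is essential that the surface area of the \emph{outer} body appears, as this is exactly what produces the constant $2$ and rules out lower-order error terms.

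With the geometric inequality in hand, applying it to $Q=(\l K)_L|_G$ and $Q'=\l K|_G$ gives, for every $L$,
\[
V_j(\l K|_G)-V_j((\l K)_L|_G)\ \le\ 2\tau(K)\,V_{j-1}(\l K|_G),
\]
whose right-hand side is deterministic, so taking expectations preserves it. I then integrate over $\cG_j^d$ against the Haar probability measure and apply Kubota's formula (pathwise to $(\l K)_L$ and then Fubini) to rewrite the left-hand side, $V_j(\l K)-\E V_j((\l K)_L)=c_{d,j,j}^{-1}\int_{\cG_j^d}\bigl(V_j(\l K|_G)-\E V_j((\l K)_L|_G)\bigr)\,dG$. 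Bounding the integrand by $2\tau(K)V_{j-1}(\l K|_G)$ and applying Kubota once more, now with subspace dimension $k=j$ and index $j-1$, namely $\int_{\cG_j^d}V_{j-1}(\l K|_G)\,dG=c_{d,j,j-1}V_{j-1}(\l K)$, yields
\[
V_j(\l K)-\E V_j((\l K)_L)\ \le\ \frac{2\,c_{d,j,j-1}}{c_{d,j,j}}\,\tau(K)\,V_{j-1}(\l K),
\]
which is the assertion, the index pattern of the constant being that of Kubota's formula for projecting onto $j$-dimensional subspaces and reading off $V_{j-1}$. The case $j=d$ is the geometric inequality applied directly, since $\cG_d^d$ is a single point, and everything holds once $\l\ge\l(K)$ so that \eqref{eq:dH<tau} is available.
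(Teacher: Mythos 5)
Your proof follows the paper's own (very terse) argument: project onto $G\in\cG_j^d$, use \eqref{eq:dH<tau} to get $V_j(\l K|_G)-V_j((\l K)_L|_G)\le 2\tau(K)\,V_{j-1}(\l K|_G)$, and integrate via Kubota's formula --- the only difference being that you supply a genuine proof (via inner parallel bodies / the coarea layer estimate) of the intermediate geometric inequality, which the paper merely asserts. One remark: your constant $c_{d,j,j-1}$ (subspace dimension $j$, intrinsic-volume index $j-1$) is the one Kubota's formula actually produces, whereas the paper's $c_{d,j-1,j}$ appears to have its indices transposed, since by the paper's own definition of $c_{d,k,j}$ it would require $j\le k=j-1$.
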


\bigskip 
For a general lower bound on the mean width we need the following Lemma. It is a dual version of Blaschke's rolling theorem, and closely related to results of McMullen \cite{McMullen1974} and Sch\"utt and Werner \cite{SchuettWerner1990} for balls rolling inside a convex body. The dual version could be deduced from these results using a duality argument, and is stated explicitly in a paper by B\"or\"oczky, Fodor and Hug \cite{BoerFodorHug2013}.
\begin{lemma}[\cite{BoerFodorHug2013}, Lemma 5.2 ]\label{le:rollingball}
Let $K \in \cK^d$ be a convex body.
There exists a measurable set $\Sigma \subset \S^{d-1}$ with positive spherical Lebesgue measure, and some $R>0$, all depending on $K$, such that for any $u \in \Sigma$ there is some $p \in \bd K$ such that
$$
K \subset p +R (B^d -u) .
$$
\end{lemma}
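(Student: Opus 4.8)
The plan is to reformulate the inclusion geometrically and then produce the required normals as reflected normals of an auxiliary convex body. The conclusion $K \subset p + R(B^d - u)$ says exactly that the closed ball $\bar B(p - Ru, R)$ of radius $R$ contains $K$ and touches $\bd K$ at the point $p$, in which case $u = (p-c)/R$ is an outer normal of $K$ at $p$ with $c = p - Ru$ the center. So the task is to exhibit, for a positive-measure set of normals $u$, a circumscribed ball of one fixed radius $R$ touching $K$ with that normal. The natural device is the set of admissible centers
$$
C_R := \{ c \in \R^d : K \subset \bar B(c,R) \} = \bigcap_{x \in K} \bar B(x,R),
$$
which is convex and is a genuine convex body as soon as $R$ exceeds the circumradius $R_0$ of $K$: then $C_R \supset \bar B(c_0, R - R_0)$, where $c_0$ is the circumcenter. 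I would fix any such $R > R_0$ once and for all.

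For each $c \in \bd C_R$ the enclosing ball $\bar B(c,R)$ touches $K$ in at least one farthest point $p$, with $|p-c| = R$; setting $u = (p-c)/R \in \S^{d-1}$ gives an outer normal of $K$ at $p$ and $K \subset \bar B(c,R) = p + R(B^d - u)$, so every such $u$ already satisfies the desired inclusion. I would therefore take $\Sigma$ to be the set of these touching normals and reduce the lemma to the claim that $\Sigma$ has positive spherical Lebesgue measure. The structural observation driving this is that, at a boundary point $c$ where the farthest point $p=p(c)$ is \emph{unique}, the convex function $c \mapsto \max_{x \in K}|x-c|$ is differentiable with gradient $(c-p)/R$, so that $u(c) = (p(c)-c)/R = -n_{C_R}(c)$ is precisely the reflected outer normal of $C_R$; hence $\Sigma \supset -n_{C_R}(c)$ for all such $c$.

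Positive measure is the heart of the matter, and the mechanism is the curvature of $C_R$. Being an intersection of balls of radius $R$, the body $C_R$ is \emph{$R$-spindle convex}: at each boundary point it lies inside the supporting ball $\bar B(p,R)$, so every principal curvature of $\bd C_R$ is at least $1/R$. Consequently the reverse Gauss map $\psi : \S^{d-1} \to \bd C_R$, sending $v$ to the point of $C_R$ with outer normal $v$, is single-valued and $R$-Lipschitz, whence $\mathcal H^{d-1}(\psi(A)) \le R^{d-1}\,\mathcal H^{d-1}(A)$ for Borel $A$. Applying this with $A = n_{C_R}(G)$, where $G \subset \bd C_R$ is the set of smooth boundary points with unique farthest point, and using $\psi \circ n_{C_R} = \mathrm{id}$ on $G$, gives
$$
\mathcal H^{d-1}(G) = \mathcal H^{d-1}\bigl(\psi(n_{C_R}(G))\bigr) \le R^{d-1}\,\mathcal H^{d-1}\bigl(n_{C_R}(G)\bigr),
$$
so $n_{C_R}(G)$ has positive measure once $G$ does. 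Finally $G$ has full $\mathcal H^{d-1}$-measure in $\bd C_R$: smoothness holds almost everywhere on the boundary of a convex body, and at a smooth point the normal cone is a single ray, forcing the farthest-point direction, and hence the farthest point itself, to be unique. Reflecting, $\Sigma \supset -n_{C_R}(G)$ has positive measure, completing the reduction.

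The step I expect to be most delicate is exactly this passage from a lower curvature bound to a \emph{quantitative, measure-nondecreasing} statement about the Gauss map: the $R$-Lipschitz reverse map is what prevents $n_{C_R}(G)$ from collapsing to a null set, and one must check that the exceptional directions, where no finite circumscribing ball exists (flat facet normals of $K$, such as the base normal of a cylinder), are confined to the measure-zero complement, which they are since they correspond exactly to the non-smooth points of $C_R$. An alternative and essentially equivalent route is the duality argument alluded to in the statement: inner rolling balls for the polar body $K^{\ast}$, guaranteed on a positive-measure set by McMullen \cite{McMullen1974} and Sch\"utt--Werner \cite{SchuettWerner1990}, transform under polarity into the circumscribed balls sought here, which is how the lemma is recorded in \cite{BoerFodorHug2013}.
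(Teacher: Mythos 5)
Your argument is correct, and it is genuinely different from what the paper does: the paper offers no proof at all, quoting the statement as Lemma 5.2 of B\"or\"oczky--Fodor--Hug \cite{BoerFodorHug2013} and remarking only that it is a dual Blaschke rolling theorem obtainable by polarity from the inner rolling-ball results of McMullen \cite{McMullen1974} and Sch\"utt--Werner \cite{SchuettWerner1990}. Your route via the center body $C_R=\bigcap_{x\in K}\bar B(x,R)$ is direct and self-contained, and all the pieces fit: for $R$ larger than the circumradius $C_R$ is a convex body; every $c\in\bd C_R$ satisfies $\max_{x\in K}|x-c|=R$, so each boundary point yields an admissible $u$; and at a smooth boundary point $c$ the unique farthest point $p$ satisfies $C_R\subset \bar B(p,R)$ with $p=c-Rn_{C_R}(c)$, so $\Sigma\supset -n_{C_R}(G)$. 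The one step you flag as delicate --- single-valuedness and $R$-Lipschitz continuity of the reverse Gauss map $\psi$ --- is exactly where a referee would want the computation written out, and it does go through without any appeal to curvature: for $c_1,c_2\in G$ with normals $v_1,v_2$ and farthest points $p_1,p_2$, the inclusions $c_2\in C_R\subset\bar B(p_1,R)$ and $c_1\in C_R\subset\bar B(p_2,R)$ give $|c_1-c_2|^2\le R\langle c_1-c_2,\,v_1-v_2\rangle$, hence $|c_1-c_2|\le R|v_1-v_2|$; this simultaneously proves injectivity of $n_{C_R}$ on $G$ and the bound $\mathcal H^{d-1}(G)\le R^{d-1}\mathcal H^{d-1}(n_{C_R}(G))$, which is all you need (you never need the Lipschitz property on all of $\S^{d-1}$, only on $n_{C_R}(G)$). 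What your approach buys is an elementary, quantitative proof avoiding polar duality; what the cited route buys is that the hard analytic content (almost-everywhere second-order behaviour of convex boundaries) is already packaged in the rolling-ball literature. Your proof as written is acceptable once the two-point estimate above replaces the slightly informal appeal to ``spindle convexity'' and principal curvatures.
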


The next theorem states the lower bound from Theorem \ref{thm:genbounds}.
\begin{theorem}\label{thm:genlowerbound}
Assume $K \subset B^d$ is a convex body. Then there is a constant $\g_1(K)$, such that
$$
W(\l K) - \E (W((\l K)_L))
\geq  \g_1 (K) \l^{-\frac{d-1}{d+1}}
$$
for $\l \geq 1$.
\end{theorem}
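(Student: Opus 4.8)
The plan is to use the integral formula from Lemma \ref{le:EW=intP} together with the lower bound on cap-avoidance probabilities from Lemma \ref{le:lower-bound-P}. By Lemma \ref{le:EW=intP},
$$
W(\l K) - \E (W((\l K)_L))
= \frac{2}{\omega_d}\int\limits_{\S^{d-1}}\int\limits_0^\infty \P\bigl((\l K)_{t,u}\cap L=\emptyset\bigr)\,dt\,du,
$$
so it suffices to produce a lower bound for the inner double integral. The strategy is to restrict attention to the set of directions $\Sigma$ supplied by the rolling-ball Lemma \ref{le:rollingball}, which has positive spherical measure, and for each such $u$ to bound the cap-avoidance probability from below using $\P(A\cap L=\emptyset)\geq 1-V_d(A)$ for the choice $A=(\l K)_{t,u}$. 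This is useful precisely when the cap is small, i.e. for $t$ up to some threshold $t_0(\l)$ where $V_d((\l K)_{t,u})$ is of order $1$; for $t$ below this threshold the integrand is bounded below by a positive constant, and integrating in $t$ over $[0,t_0(\l)]$ gives the main contribution.

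First I would quantify the volume of a thin cap. For $u\in\Sigma$, Lemma \ref{le:rollingball} gives $K\subset p+R(B^d-u)$, meaning $K$ is locally squeezed between $\bd K$ and a ball of radius $R$ touching from outside at the boundary point in direction $u$. Scaling by $\l$, the cap $(\l K)_{t,u}$ is contained in the corresponding cap of the ball $\l R B^d$ (suitably translated) of width $t$. The volume of a width-$t$ cap of a ball of radius $\l R$ is, for small $t$, of order $t^{\frac{d+1}{2}}(\l R)^{\frac{d-1}{2}}$. Hence
$$
V_d\bigl((\l K)_{t,u}\bigr)\leq c_d\,(\l R)^{\frac{d-1}{2}}\,t^{\frac{d+1}{2}} .
$$
Choosing the threshold $t_0=t_0(\l)$ so that the right-hand side equals $\tfrac12$, one finds $t_0\asymp (\l R)^{-\frac{d-1}{d+1}}$. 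For $0\leq t\leq t_0$ we then have $V_d((\l K)_{t,u})\leq \tfrac12$, so by Lemma \ref{le:lower-bound-P} the cap avoids the lattice with probability at least $\tfrac12$.

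Combining these estimates, I would restrict the outer integral to $u\in\Sigma$ and the inner integral to $t\in[0,t_0]$, where the integrand is at least $\tfrac12$, giving
$$
W(\l K) - \E (W((\l K)_L))
\geq \frac{2}{\omega_d}\int\limits_{\Sigma}\int\limits_0^{t_0}\tfrac12\,dt\,du
= \frac{1}{\omega_d}\,\sigma_{d-1}(\Sigma)\,t_0
\geq \g_1(K)\,\l^{-\frac{d-1}{d+1}},
$$
since $\sigma_{d-1}(\Sigma)>0$ and $t_0\asymp \l^{-\frac{d-1}{d+1}}$; the constant $\g_1(K)$ absorbs $R$, the measure of $\Sigma$, and the dimensional constants. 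The main obstacle I anticipate is the geometric step bounding the cap volume: one must use the rolling-ball containment carefully to ensure the outer ball has radius comparable to $\l R$ after scaling (so that the exponent on $\l$ comes out exactly as $\frac{d-1}{2}$), and one must verify that the small-$t$ asymptotic for the spherical cap volume can be replaced by a clean upper bound valid on the whole relevant range $t\in[0,t_0]$, which requires only $t_0\to 0$ and hence holds for $\l\geq 1$ up to adjusting constants. Everything else is a routine application of Fubini and the two probability lemmas.
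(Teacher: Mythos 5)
Your proposal is correct and follows essentially the same route as the paper: both combine Lemma \ref{le:EW=intP}, the rolling-ball Lemma \ref{le:rollingball}, the elementary avoidance bound $\P(A\cap L=\emptyset)\geq 1-V_d(A)$, and the ball-cap volume estimate of Lemma \ref{le:cap-ball}. The only cosmetic difference is that the paper rescales $t=\l^{-\frac{d-1}{d+1}}x$ and integrates $\bigl(1-c_2R^{\frac{d-1}{2}}x^{\frac{d+1}{2}}\bigr)_+$ over all $x$, whereas you truncate at a threshold $t_0\asymp(\l R)^{-\frac{d-1}{d+1}}$ where the integrand is at least $\tfrac12$.
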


We prepare the proof of this theorem by the following lemma.
\begin{lemma}\label{le:cap-ball}
Let $B(0,r)$ be a ball of radius $r$. Then
$$
c_1 r^{ \frac{d-1}2} t^{\frac{d+1}2} \leq
V_d(B(0,r)_{t,u })
\leq c_2 r^{ \frac{d-1}2} t^{\frac{d+1}2} \ .
$$
\end{lemma}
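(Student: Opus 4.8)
The plan is to reduce to an explicit one-dimensional integral by slicing the cap into hyperplane cross-sections orthogonal to $u$. By the rotational invariance of the ball I may assume $u = e_d$, so that
$$
B(0,r)_{t,u} = \{ (x', x_d) \in \R^d \colon |x|^2 \leq r^2, \ x_d \geq r-t \}.
$$
At height $x_d = h$ with $r - t \leq h \leq r$ the cross-section is a $(d-1)$-dimensional ball of radius $\sqrt{r^2 - h^2}$, hence of $(d-1)$-volume $\k_{d-1}(r^2 - h^2)^{\frac{d-1}2}$. Integrating over $h$ and substituting $h = r - s$ gives
$$
V_d(B(0,r)_{t,u}) = \k_{d-1} \int_0^t \bigl( 2rs - s^2 \bigr)^{\frac{d-1}2} \, ds = \k_{d-1} \int_0^t s^{\frac{d-1}2} (2r - s)^{\frac{d-1}2} \, ds .
$$

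The key step is to control the factor $(2r - s)^{\frac{d-1}2}$ on the interval $s \in [0,t]$. Restricting to the relevant regime $t \leq r$ of small caps, I have $r \leq 2r - s \leq 2r$ for all $s \in [0,t]$, and therefore the two-sided bound
$$
r^{\frac{d-1}2} \, s^{\frac{d-1}2} \leq s^{\frac{d-1}2}(2r-s)^{\frac{d-1}2} \leq (2r)^{\frac{d-1}2} \, s^{\frac{d-1}2} .
$$
The remaining integral $\int_0^t s^{\frac{d-1}2} \, ds = \frac{2}{d+1} t^{\frac{d+1}2}$ is elementary, and inserting it into both sides yields
$$
\frac{2\k_{d-1}}{d+1} \, r^{\frac{d-1}2} t^{\frac{d+1}2} \leq V_d(B(0,r)_{t,u}) \leq \frac{2^{\frac{d+1}2}\k_{d-1}}{d+1}\, r^{\frac{d-1}2} t^{\frac{d+1}2} ,
$$
which is the claim with $c_1 = \frac{2\k_{d-1}}{d+1}$ and $c_2 = \frac{2^{\frac{d+1}2}\k_{d-1}}{d+1}$, both depending only on $d$.

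Since every step is an exact computation or an elementary monotonicity estimate, there is no genuine obstacle here; the only point requiring care is the admissible range of $t$. The clean two-sided bound on $2r - s$ uses $t \leq r$, which is exactly the regime in which the cap is a proper spherical cap and in which the lemma is applied. Should one need the estimate for $r \leq t \leq 2r$ as well, the lower bound can be preserved by integrating only over $s \in [0,r]$, while the upper bound then follows from $V_d(B(0,r)_{t,u}) \leq V_d(B(0,r)) = \k_d r^d$ together with $t \geq r$; I would include such a remark only if the applications require it.
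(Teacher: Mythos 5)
Your proof is correct and follows essentially the same route as the paper: both arguments reduce to the cross-sectional disk of radius $\sqrt{2rt-t^2}$, the paper sandwiching the cap between a cone and a cylinder over that disk while you integrate the cross-sections directly, which yields the same bounds with explicit constants. Your explicit attention to the regime $t\leq r$ (and the remark on $r\leq t\leq 2r$) is a welcome precision that the paper leaves implicit.
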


\begin{proof}
For $r=1$, the intersection of $B^d$ with a hyperplane of distance $1-t$ from the origin is a $(d-1)$-dimensional ball with radius
$$\sqrt{2t -t^2} \in [t,2t] . $$
The volume of the cap $B^d_{t,u}$ is bounded from above by the volume of a cylinder and from below by the volme of a cone whose base are the same $(d-1)$-dimensional ball mentioned above.
Hence
$$
\frac 1d \kappa_{d-1} t^{\frac{d+1}2}
 \leq
V_d(B^d_{t,u})
\leq
2^{\frac{d-1}2} \kappa_{d-1} t^{\frac{d+1}2}  .
$$
Because
$$
V_d(B(0,r)_{t,u}) =
r^d V_d(B^d_{t/r,u})
$$
this proves the lemma.
\end{proof}

\begin{proof} [Proof of Theorem \ref{thm:genlowerbound}.]

We substitute $t = \l^{-\frac{d-1}{d+1}} x$ and obtain
\begin{align}\label{eq:Wdiff-smooth}
\nonumber
\l^{\frac{d-1}{d+1}} &\left(W(\l K) - \E (W((\l K)_L)) \right)
\\ \nonumber & =
\frac{2}{\omega_d}\int\limits_{\S ^{d-1}}\int\limits^{\infty}_{0} \l^{\frac{d-1}{d+1}} \P ((\l K)_{t,u}\cap L=\emptyset) dtdu
\\ &=
\frac{2}{\omega_d}\int\limits_{\S ^{d-1}}\int\limits^{\infty}_{0}  \P \left((\l K)_{ \l^{-\frac{d-1}{d+1}} x,u}\cap L=\emptyset \right) dxdu
\end{align}
By Lemma \ref{le:rollingball} there exists a suitable set $\Sigma \subset \S^{d-1}$ with $\l_{d-1} (\Sigma) > 0$ and a radius $R>0$ such that
$$
K \subset x +R (B^d -u) .
$$
For $u \in \Sigma$, by Lemma \ref{le:lower-bound-P} and Lemma \ref{le:L_meets_cap}  we have the lower bound
\begin{align*}
\P \left((\l K)_{ \l^{-\frac{d-1}{d+1}} x,u}\cap L=\emptyset \right)
& \geq
1-V_d \left( (\l K)_{\l^{-\frac{d-1}{d+1}} x,u} \right)
\\ &\geq
1-V_d \left(B(0,\l R)_{\l^{-\frac{d-1}{d+1}} x,u}\right)
\end{align*}
where we used that $\l K$ is contained in a ball of radius $\l R$.
Because of Lemma \ref{le:cap-ball}, we have
\begin{align*}
\P \left((\l K)_{ \l^{-\frac{d-1}{d+1}} x,u}\cap L=\emptyset \right)
&\geq
1-c_2 R^{ \frac{d-1}2} x^{\frac{d+1}2}
 .
\end{align*}
This implies
\begin{align*}
\l^{\frac{d-1}{d+1}} \left(W(\l K) - \E (W((\l K)_L)) \right)
& \geq
\frac{2}{\omega_d}\int\limits_{\Sigma}\int\limits^{\infty}_{0}  \P \left((\l K)_{ \l^{-\frac{d-1}{d+1}} x,u}\cap L=\emptyset \right) dxdu
\\ & \geq
\frac{2 }{\omega_d}
\int\limits_{\Sigma} du\  \int\limits^{\infty}_{0}  \left(1-c_2 R^{ \frac{d-1}2} x^{\frac{d+1}2}\right)_+ \   dx =
\\  & = \g_1 (K)
\end{align*}
where the constant $\g_1 (K)$ depends on $\Sigma $ and $ R$ and thus on $K$.

\end{proof}

\section{Smooth Convex Bodies}\label{sec:smooth}

In this section we prove a precise version of the upper bound in Theorem \ref{thm:smooth} concerning smooth convex bodies.
Fix the dimension $d \geq 2$, and for $r >0$ denote by $\cK(r)$ the set of convex bodies where a ball of radius $r$ rolls inside $K$, i.e.
$K \in \cK^d$ and for all $p \in \bd K$ there exist a unit vector $u \in \S^{d-1} $ with
$$
p + r (B^d -u)  \subset K .
$$

\begin{theorem}\label{thm:cKD}
Assume $K \in \cK(r)$. Then there is a constant $\g_3$ depending on $r$, such that
$$
W(\l K) - \E (W((\l K)_L)) \leq
\g_3 \l^{-\frac{d-1}{d+1}}
$$
for $\l$ sufficiently large.
\end{theorem}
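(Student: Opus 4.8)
The plan is to mirror the structure of the lower-bound proof (Theorem~\ref{thm:genlowerbound}), but now exploiting the rolling-ball condition $K \in \cK(r)$ from \emph{both} sides. Starting from Lemma~\ref{le:EW=intP}, I make the same substitution $t = \l^{-\frac{d-1}{d+1}} x$ to get
\begin{equation*}
\l^{\frac{d-1}{d+1}} \left(W(\l K) - \E (W((\l K)_L)) \right)
=
\frac{2}{\omega_d}\int\limits_{\S ^{d-1}}\int\limits^{\infty}_{0}  \P \left((\l K)_{ \l^{-\frac{d-1}{d+1}} x,u}\cap L=\emptyset \right) dxdu .
\end{equation*}
The goal is to bound the right-hand side by a constant depending only on $r$. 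Since the lower bound already shows this quantity is bounded below by $\g_1(K)>0$, the real content is a matching \emph{upper} bound on the probability, uniformly in $u \in \S^{d-1}$ and in $\l$.

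For the upper bound on $\P\left((\l K)_{t,u}\cap L = \emptyset\right)$, I would use Lemma~\ref{le:upper-bound-P}, which gives $\P(Q \cap L = \emptyset) \leq c/V_d(Q)$ whenever $V_d(Q) \geq \nu$. The essential geometric input is that $K \in \cK(r)$ means a ball of radius $r$ rolls inside $K$ at every boundary point, so for each $u$ there is an inscribed ball $B(p, r) \subset K$ tangent at the boundary point with normal $u$. Consequently $(\l K)_{t,u} \supset (\l B(p,r))_{t,u} = B(0,\l r)_{t,u}$ (up to translation), and by Lemma~\ref{le:cap-ball},
\begin{equation*}
V_d\left((\l K)_{t,u}\right) \geq V_d\left(B(0,\l r)_{t,u}\right) \geq c_1 (\l r)^{\frac{d-1}{2}} t^{\frac{d+1}{2}} .
\end{equation*}
After substituting $t = \l^{-\frac{d-1}{d+1}} x$, the powers of $\l$ cancel exactly: $(\l r)^{\frac{d-1}{2}} \l^{-\frac{d-1}{2}} = r^{\frac{d-1}{2}}$, leaving $V_d\left((\l K)_{\l^{-\frac{d-1}{d+1}}x,\,u}\right) \geq c_1 r^{\frac{d-1}{2}} x^{\frac{d+1}{2}}$, independent of $\l$. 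This is the reason the exponent $\frac{d-1}{d+1}$ is the correct scaling.

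Combining these, for $x$ large enough that the cap volume exceeds the threshold $\nu$, Lemma~\ref{le:upper-bound-P} gives $\P\left(\cdots\right) \leq c \, (c_1 r^{\frac{d-1}{2}})^{-1} x^{-\frac{d+1}{2}}$, which is integrable at infinity since $\frac{d+1}{2} > 1$ for $d \geq 2$. For small $x$ (where the volume is below $\nu$, so the probability is at most $1$) I simply bound the integrand by $1$ on a bounded interval $[0, x_0]$, contributing a finite constant. Integrating over $\S^{d-1}$ then multiplies by $\omega_d$, and the whole right-hand side is bounded by a constant $\g_3$ depending only on $r$ (and $d$), as claimed.

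\textbf{The main obstacle} I expect is ensuring the uniformity in $u$ and the threshold handling. The rolling-ball radius $r$ is uniform over $\bd K$ by hypothesis, so the volume lower bound is genuinely uniform in $u$ — this is exactly what $\cK(r)$ buys us over a general convex body, where only the set $\Sigma$ of Lemma~\ref{le:rollingball} admits a rolling ball. The delicate point is the crossover value $x_0$ at which $c_1 r^{\frac{d-1}{2}} x^{\frac{d+1}{2}} = \nu$: this $x_0$ depends on $r$ but not on $\l$, so splitting the inner integral at $x_0$ and bounding by $1$ below and by the tail estimate above both yield $\l$-independent constants. One should check that for $\l$ sufficiently large the caps under consideration are themselves convex bodies to which Lemma~\ref{le:upper-bound-P} applies, which holds once $\l r$ is large enough; this is where the hypothesis $\l$ sufficiently large enters.
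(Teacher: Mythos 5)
Your proposal is correct and follows essentially the same route as the paper: the same substitution $t = \l^{-\frac{d-1}{d+1}}x$, the same use of the rolling ball of radius $\l r$ tangent at the boundary point with normal $u$ combined with Lemma~\ref{le:cap-ball} and Lemma~\ref{le:upper-bound-P}, and the same split of the inner integral at the $\l$-independent threshold where the cap volume reaches $\nu$. The only cosmetic difference is that you apply Lemma~\ref{le:upper-bound-P} directly to the cap of $\l K$ via the volume lower bound, while the paper first passes to the cap of the inscribed ball by monotonicity of the avoidance probability; these are equivalent.
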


\begin{proof}
To prepare for the use of Lemma \ref{le:upper-bound-P} in the following, we assume that $\l \geq \l(r)$ where $\l(r)$ is chosen such that 
$$ V_d(B(0, \l(r)\, r) = 2 \nu .$$
As in the proof of Theorem \ref{thm:genlowerbound} we start with
\begin{align*}
\l^{\frac{d-1}{d+1}} &\left(W(\l K) - \E (W((\l K)_L)) \right)
\\ &=
\frac{2}{\omega_d}\int\limits_{\S ^{d-1}}\int\limits^{\infty}_{0}  \P \left((\l K)_{ \l^{-\frac{d-1}{d+1}} x,u}\cap L=\emptyset \right) dxdu  .
\end{align*}

We first use that each boundary point of $\l K$ is touched from inside by a ball of radius $\l r$, and then Lemma \ref{le:upper-bound-P},
\begin{align}
\nonumber \l^{\frac{d-1}{d+1}} &\left(W(\l K) - \E (W((\l K)_L)) \right)
\\ \nonumber & =
\frac{2}{\omega_d}\int\limits_{\S ^{d-1}}\int\limits^{\infty}_{0}  \P \left(B(0, \l r)_{ \l^{-\frac{d-1}{d+1}} x,u}\cap L=\emptyset \right) dxdu
\\ \label{eq:I1} &\leq
2 \int\limits^{x_1}_{0} dx
+
2 \int\limits^{\infty}_{x_1} \frac c{V_d\left(B(0,  \l r)_{\l^{-\frac{d-1}{d+1}} x,u}\right)} dx .
\end{align}
Here $x_1=x_1(r)$ is chosen such that
$$V_d\left(B(0, \l r)_{\l^{-\frac{d-1}{d+1}} x_1,u}\right) = \nu,
$$
which by Lemma \ref{le:cap-ball} implies
$$
\left(\frac{\nu}{c_2}\right)^{\frac 2{d+1}} r^{-\frac{d-1}{d+1}}
\leq
x_1
\leq
\left( \frac{\nu}{c_1}\right)^{\frac 2{d+1}} r^{-\frac{d-1}{d+1}}
.
$$
The first integral in \eqref{eq:I1} is bounded by $2 x_1 $.

For the second integral in \eqref{eq:I1} we use Lemma \ref{le:cap-ball} to obtain
\begin{align*}
2 \int\limits^{\infty}_{x_1} \frac c{V_d\left(B(0, \l r)_{\l^{-\frac{d-1}{d+1}} x,u}\right)} dx
& \leq
\frac {2 c }{c_1}  r^{ -\frac{d-1}2}\int\limits^{\infty}_{x_1} x^{-\frac{d+1}2}   dx
\\ &=
\frac {4 c }{c_1 (d-1)}   r^{ -\frac{d-1}2}x_1^{-\frac{d-1}2} .
\end{align*}

Therefore,
$$ \l^{\frac{d-1}{d+1}} \left( W(\l K)-\E (W((\l K)_L)) \right) \leq \g_3 $$
where $\g_3$ depends on $r$. 
\end{proof}
It would be helpful, if for smooth $K \in \cK^d$ we have the convergence
$$
\P \left( (\l K)_{ \l^{-\frac{d-1}{d+1}} x,u} \cap L=\emptyset \right)
 \to
 f_K(x,u)
$$
as $\l \to \infty$, with some measurable function $f_K(x,u)$. Yet we have not been able to prove that.

\section{Polytopes}\label{sec:polytopes}
The preceding section shows that the lower bound in Theorem \ref{thm:genbounds} cannot be improved in general since it is - up to constants - sharp for smooth convex bodies. In this section we prove that also the upper bound is optimal - up to constants.

\begin{theorem}
Let $P \in \cP^d $ be a $d$-dimensional polytope with nonempty interior. Then there is a constant $\g(P)>0$ such that
$$ \lim\limits_{\l \to \infty} W(\l P)-\E (W((\l P)_L)) = \g(P) . $$
\end{theorem}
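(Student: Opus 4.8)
The plan is to start from the integral representation in Lemma~\ref{le:EW=intP}, namely
$$
W(\l P)-\E (W((\l P)_L))
=
\frac{2}{\omega_d}\int\limits_{\S ^{d-1}}\int\limits^{\infty}_{0} \P \big((\l P)_{t,u}\cap L=\emptyset\big)\,dt\,du ,
$$
and to show that the inner integral converges pointwise (for almost every $u$) to a finite limit as $\l\to\infty$, and then to pass the limit through the outer integral. By Lemma~\ref{le:L_meets_cap} the $t$-integrand vanishes for $t\geq\tau(P)$ uniformly in $\l$ and $u$, so the domain of integration is effectively the bounded box $\S^{d-1}\times[0,\tau(P)]$, and dominated convergence will be available once pointwise convergence of the probability is established. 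Thus the whole theorem reduces to understanding, for fixed direction $u$ and fixed width $t$, the limiting behaviour of $\P\big((\l P)_{t,u}\cap L=\emptyset\big)$.

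The key geometric point — and the reason polytopes behave differently from smooth bodies — is that a cap $(\l P)_{t,u}$ of a polytope does \emph{not} grow in all directions as $\l\to\infty$. First I would fix a direction $u$ that is the outer normal of a facet $F$ of $P$ (these are the only directions contributing in the limit; the contribution of the measure-zero set of directions hitting lower-dimensional faces, and of directions where the cap leans against several facets, must be argued to be negligible). For such a $u$, as $\l\to\infty$ the cap $(\l P)_{t,u}$ approaches a prism-like region: its cross-sections parallel to $F$ stabilise to a fixed shape (a translate of $\l F$, whose in-plane extent grows, but whose shape relative to the scaled facet is fixed), while its height in the $u$-direction stays exactly $t$. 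Concretely, $(\l P)_{t,u}$ converges, after recentering, to the infinite slab-cap $\{x : 0\le \langle x,u\rangle \le t\}\cap C$ over the supporting facet cone $C$ at the relevant vertex/facet, whose volume is \emph{bounded} (in fact tends to $t$ times the growing facet area, hence is \emph{not} bounded) — so I must be careful here: the correct statement is that the cap volume grows like $\l^{d-1}t$, which forces the avoidance probability to zero unless $t$ is small, and the surviving contribution comes from a boundary layer near the edges of the facet.

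The heart of the argument is therefore a local analysis near the $(d-2)$-faces (edges) of each facet $F$. The region of $u$-directions for which the cap can avoid $L$ with non-negligible probability shrinks, and simultaneously the relevant cap near an edge looks, after rescaling, like a fixed wedge-shaped region meeting the lattice; by the translation/rotation invariance computation in Lemma~\ref{le:lower-bound-P} together with the upper bound of Lemma~\ref{le:upper-bound-P}, the avoidance probability for such a wedge of bounded volume converges to a definite limiting probability that depends only on the local wedge angle (the dihedral angle of $P$ along that edge) and on $t$. I would set up the limiting object as a \emph{random lattice meeting a fixed infinite wedge}, use the invariance of $L$ under the relevant translations to reduce to a fundamental-domain computation, and show $\P\big((\l P)_{t,u}\cap L=\emptyset\big)\to g_F(t,u)$ for an explicit limit function supported on the normal cone of each edge. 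Summing the edge contributions over all facets, integrating $g_F(t,u)$ against $dt\,du$, and collecting the constants yields $\g(P)$, and strict positivity of $\g(P)$ follows because at least one edge contributes a set of $(u,t)$ of positive measure on which the avoidance probability is bounded below (via Lemma~\ref{le:lower-bound-P}, since small wedges have volume less than one).

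The main obstacle I expect is precisely this localisation and the interchange of limit and integral: one must prove genuine pointwise convergence of $\P\big((\l P)_{t,u}\cap L=\emptyset\big)$ for a.e.\ $(u,t)$, control the non-facet directions (where the cap is cut by several facets simultaneously, producing vertex and edge caps whose combinatorics change with $\l$), and verify a uniform integrable domination so that dominated convergence applies. The uniform cutoff $t\le\tau(P)$ from Lemma~\ref{le:L_meets_cap} handles domination in $t$, but the delicate part is showing that the contribution of directions near the $(d-2)$-skeleton of the normal fan, where two or more facets compete, vanishes in the limit rather than contributing an uncontrolled amount; this requires a careful estimate that the measure of such ``bad'' directions times the maximal cap-avoidance probability there tends to zero.
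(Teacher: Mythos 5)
Your overall frame is right --- start from Lemma \ref{le:EW=intP}, cut the $t$-integral off at $\tau(P)$ using Lemma \ref{le:L_meets_cap}, pass to the limit by dominated convergence, and get positivity from Lemma \ref{le:lower-bound-P} because small caps have small volume --- and this is exactly the skeleton of the paper's proof. But the geometric heart of your argument rests on a false premise, and correcting it removes all of the difficulty you then try to fight. You localise at facet normals and claim that ``these are the only directions contributing in the limit,'' with directions touching lower-dimensional faces forming a null set. This is exactly backwards for a polytope: the normal cone of a facet is a single ray, so the set of facet-normal directions is \emph{finite} and hence null in $\S^{d-1}$, whereas the normal cones $N(v)$ of the \emph{vertices} are the full-dimensional cells of the normal fan and cover $\S^{d-1}$ up to a null set. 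Consequently, for almost every $u$ the cap $(\l P)_{t,u}$ is not a slab over a growing facet but a \emph{bounded} pyramid cut off at the unique vertex $\l v$ with $u \in N(v)$; after translating $v$ to the origin it stabilises, as $\l \to \infty$, to the cap $(C_v)_{t,u}$ of the tangent cone $C_v$ of $P$ at $v$. The avoidance probability therefore converges pointwise to $\P((C_v)_{t,u}\cap L=\emptyset)$ with no rescaling, no boundary layer and no wedge analysis; the limit constant is $\g(P) = \frac{2}{\omega_d}\sum_{v}\int_{N(v)}\int_0^{\tau(P)}\P((C_v)_{t,u}\cap L=\emptyset)\,dt\,du$, and positivity follows from Lemma \ref{le:lower-bound-P} since $(C_v)_{t,u}$ is a pyramid of height $t$ whose volume tends to $0$ as $t\to 0$.

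Your proposed repair --- a boundary-layer analysis near the $(d-2)$-faces producing a limit function ``supported on the normal cone of each edge'' --- cannot work even on its own terms: for $d\geq 3$ the normal cone of a $(d-2)$-face meets $\S^{d-1}$ in a one-dimensional set, so a limit supported there integrates to zero and your $\g(P)$ would vanish, contradicting the claimed positivity. (In $d=2$ your ``edges of the facet'' are the vertices, which is why the picture accidentally looks plausible there.) Once you replace facets by vertices, the ``bad'' directions you worry about --- those whose supporting hyperplane meets a face of positive dimension --- really are a null set and can simply be discarded from the spherical integral, and the interchange of limit and integral is immediate from the bound $\P\leq 1$ on the bounded domain $\S^{d-1}\times[0,\tau(P)]$.
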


\begin{proof}

The polytope $P$ is the convex hull of its vertices $v \in \cF_0(P)$, and for each vertex $v $ we denote by $N(v) \subset \S^{d-1}$ the (relatively open) normal cone at $v$, i.e. the set of all unit vectors orthogonal to a supporting hyperplane to $P$ touching $P$ at $v$.
$$
N(v)=
\{ u \in \S^{d-1} \colon H_K(u)  \cap P =v \} .
$$
Since $P$ is a polytope, the set of unit vectors in 
$$\S^{d-1} \backslash \left(\bigcup_{v \in \cF_0(P) } N(v)\right) $$ 
is a null set with respect to spherical Lebesgue measure.
Thus Lemma \ref{le:EW=intP} gives
\begin{eqnarray*}
\lim_{\l \to \infty} \lefteqn{ \Big( W(\l P)-\E\, W((\l P)_L)\Big) = }
& &
\\ & = &
\sum_{v \in \cF_0(P) } \lim_{\l \to \infty} \int\limits_{N(v)} \int\limits^{\infty}_{0} \P ((\l P)_{t,u}\cap L=\emptyset)\, dtdu
\\ & = &
\sum_{v \in \cF_0(P) } \lim_{\l \to \infty} \int\limits_{N(v)} \int\limits_0^{\tau(P)} \P ((\l P)_{t,u}\cap L=\emptyset)\, dtdu
\end{eqnarray*}
where in the second line we used Lemma \ref{le:L_meets_cap}. Because the probability is bounded by $1$, Lebesgues dominated convergence theorem can be applied yielding
\begin{equation*}
\lim_{\l \to \infty} W(\l P)-\E\, W((\l P)_L)
 =
\sum_{v \in \cF_0(P) } \int\limits_{N(v)} \int\limits^{\tau (P)}_{0} \lim_{\l \to \infty} \P ((\l P)_{t,u}\cap L=\emptyset)\, dtdu .
\end{equation*}
By the translation invariance of the measure $dL$ on $\cL$,
$$
\P ((\l P)_{t,u}\cap L=\emptyset) =
\P ((\l (P-v))_{t,u}\cap L=\emptyset) $$
and the set $\l (P-v)$ converges to an infinite cone $C_v$ with apex in the origin, as $\l \to \infty$.
Thus for $u \in N(v)$ and $t$ fixed we have
$$
\lim_{\l \to \infty} \P ((\l P)_{t,u}\cap L=\emptyset) =
\P (C_v)_{t,u}\cap L=\emptyset) ,
$$
and the mean width difference converges to
\begin{eqnarray*}
\lim_{\l \to \infty} W(\l P)-\E\, W((\l P)_L)
&=&
\sum_{v \in \cF_0(P) } \int\limits_{N(v)} \int\limits^{\tau (P)}_{0} \P ((C_{v})_{t,u}\cap L=\emptyset)\, dtdu
\\ &=& \g(P) .
\end{eqnarray*}
We need some argument that the probability $\P ((C_{v})_{t,u}\cap L=\emptyset)$ is not vanishing.
By Lemma~\ref{le:lower-bound-P},
$$
\P ((C_{v})_{t,u}\cap L=\emptyset)
\geq 1- V_d((C_{v})_{t,u}).
$$
Observe that $(C_v)_{t,u}$ is a pyramid with height $t$, and thus the volume tends to $0$ for $t \to 0$. Therefore the probability is bounded from below by a function which is strictly positive in a neighborhood of $t=0$, hence $c(P)$ must be positive.

\end{proof}


\end{document}